\documentclass[11pt,reqno]{amsart}
\usepackage{enumerate, latexsym, amsmath, amsfonts, amssymb, amsthm, color}
\def\pmod #1{\ ({\rm{mod}}\ #1)}
\def\Z{\mathbb Z}
\def\N{\mathbb N}

\def\l{\left}
\def\r{\right}
\def\bg{\bigg}
\def\({\bg(}
\def\){\bg)}
\def\t{\text}
\def\f{\frac}

\def\ls{\leqslant}
\def\gs{\geqslant}
\def\se {\subseteq}
\def\sm{\setminus}

\def\al{\alpha}

\def\eq{\equiv}

\def\da{\delta}

\def\Proof{\noindent{\it Proof}}
\def\Ack{\medskip\noindent {\bf Acknowledgment}}

\theoremstyle{plain}
\newtheorem{theorem}{Theorem}

\newtheorem{lemma}{Lemma}

\newtheorem{conjecture}{Conjecture}
\theoremstyle{definition}
\newtheorem{definition}{Definition}

\theoremstyle{remark}
\newtheorem{remark}{Remark}

\makeatletter
\@namedef{subjclassname@2020}{%
  \textup{2020} Mathematics Subject Classification}
\makeatother

\allowdisplaybreaks

 \vspace{4mm}

\begin{document}
\hbox{Preprint, {\tt arXiv:2606.18234}}
\medskip

\title
[On zero-sum problems of new types]
{On zero-sum problems of new types}

\author
[Zhi-Wei Sun] {Zhi-Wei Sun}

\address{School of Mathematics, Nanjing
University, Nanjing 210093, People's Republic of China}
\email{zwsun@nju.edu.cn}

\keywords{Zero-sum, EGZ theorem, congruence.
\newline \indent 2020 {\it Mathematics Subject Classification}. Primary 11B75, 05E16; Secondary 11A07, 20K01.
\newline \indent Supported by the Natural Science Foundation of China (grant 12371004).}

\begin{abstract} In this paper, we investigate zero-sum problems of new types. For example, given $2n-1$ integers $a_1,\ldots,a_{2n-1}$ not divisible by an integer $n>1$, we prove that for some nonempty $I\subseteq\{1,\ldots,2n-1\}$ with $|I|\leqslant n$, the sum $\sum_{i\in I}a_i$ is divisible by $n$ but not divisible by $n^2$. We also pose several conjectures for further research.
\end{abstract}
\maketitle

\section{Introduction}
\setcounter{lemma}{0}
\setcounter{theorem}{0}
\setcounter{corollary}{0}
\setcounter{remark}{0}
\setcounter{definition}{0}
\setcounter{equation}{0}

Let $\N=\{0,1,2,\ldots\}$ and $\Z^+=\{1,2,3,\ldots\}$.
In 1961 P. Erd\H os, A. Ginzburg and A. Ziv \cite{EGZ} proved the following classical
theorem.

\medskip
\noindent {\bf EGZ Theorem.} {\it Let $a_1,\ldots,a_{2n-1}\in\Z$ with $n\in\Z^+$.
Then we have $\sum_{i\in I}a_i\eq0\pmod n$ for some $I\se\{1,\ldots,2n-1\}$ with $|I|=n$.}
\medskip

For a finite additive abelian group $G$, its {\it EGZ constant} $s(G)$
is defined as the least positive integer $k$ such that any sequence over $G$ of length $k$
has a zero-sum subsequence of length $\exp(G)$, where $\exp(G)$ is the exponent of $G$.
The EGZ theorem is equivalent to $s(\Z_n)=2n-1$, where $\Z_n$ denotes the cyclic group $\Z/n\Z$.

Let $n\in\Z^+$, and consider $n-1$ copies of the four ordered pairs
$$(0,0),\ (0,1),\ (1,0),\ (1,1).$$
Clearly, no $n$ of them sum to an ordered pair congruent to $(0,0)$ modulo $n$.
So, $s(\Z_n^2)>4n-4$. On the other hand,
a conjecture of A. Kemnitz \cite{K} confirmed by C. Reiher \cite{R}
states that any sequence over $\Z_n^2$ of length at least $4n-3$
contains a zero-sum subsequence of length $n$. So $s(\Z_n^2)=4n-3$.

 For any prime $p$,
the $p$-adic valuation of an integer $m$ is defined as
$$\nu_p(m)=\sup\{n\in\N:\ p^n\mid m\},$$
and we also write $p^n\|m$ when $\nu_p(m)=n\in\N$.
At the end of \cite{G25}, W. Gao et al. introduced the following definition involving $p$-adic valuations.

\begin{definition} [Gao-Hui-Jiang-Li-Wang] Let $p$ be a prime and let $\al\in\Z^+$. Define the constant $s(p,\al)$ as
the smallest positive integer $k$ such that for any integers
$a_1,\ldots,a_k\not\eq0\pmod p$ we can select $p$ of them for which the $p$-adic valuation of their sum is exactly $\al$.
\end{definition}

It is easy to see that $s(2,1)=3$.
For any odd prime $p$, Gao et al. \cite{G25} showed that $s(p,1)>2p$ and conjectured that $s(p,1)=2p+1$. In a recent paper, Gao et al. \cite{G26} proved that $s(p,1)\ls 3p-2$ for any prime $p>3$.

For a finite additive abelian group $G$, its {\it Davenport constant} $D(G)$ is defined as the least positive integer $k$ such that for any $a_1,\ldots,a_k\in G$ there is a nonempty
$I\se\{1,\ldots,k\}$ such that $\sum_{i\in I}a_i=0$. If $a_1,\ldots,a_k\in G$ with $k\gs|G|$, then
$$0,\ a_1,\ a_1+a_2,\ \ldots,\ a_1+\cdots +a_k$$
all belong to $G$ and they cannot be pairwise different by the Pigeonhole Principle.
 Thus $D(G)\ls |G|$.  It is known that $D(\Z_n)=n$ and $D(\Z_n^2)=2n-1$ for any $n\in\Z^+$ (cf. J. E. Olson \cite{O}).

With the above background, in this paper we study zero-sum problems of new types.

\begin{definition}\label{Def-w} Let $n>1$ and $r>0$ be integers. Define the new constant $w_r(n)$ as the least positive integer $k$ such that for any $k$ integer vectors ${\bf a}_i=(a_{i1},\ldots,a_{ir})\not\eq{\bf 0}=(0,\ldots,0)\pmod n$ $(i=1,\ldots,k)$, there is an $I\se\{1,\ldots,k\}$ such that
$\sum_{i\in I}{\bf a}_i\equiv{\bf 0}\pmod n$ but $\sum_{i\in I}{\bf a}_i\not\equiv{\bf 0}\pmod {n^2}.$
\end{definition}

\begin{remark} For a general finite abelian group
$G\cong\Z_{n_1}\oplus\cdots\oplus \Z_{n_r}$ with $1<n_1\mid n_2\mid\cdots\mid n_r$, we may define
the constant $w(G)$ as the least positive integer $k$ such that among any $k$ integer vectors
${\bf a}_1,\ldots,{\bf a}_k\in\Z^r$ not congruent to ${\bf 0}=(0,\ldots,0)\in\Z^r$ modulo the vector ${\bf n}=(n_1,\ldots,n_r)$ we can select some of them such that their sum is congruent to ${\bf 0}$
modulo ${\bf n}$ but not congruent to ${\bf 0}$ modulo ${\bf n}^2=(n_1^2,\ldots,n_r^2)$.
With this notation, we have $w_r(n)=w(\Z_n^r)$ for any integer $n>1$.
\end{remark}

Now we state our first and second theorems.

\begin{theorem}\label{Th-2n-1} Let $n>1$ be an integer. Then $w_1(n)=2n-1$.
Moreover, for any integers $a_1,\ldots,a_{2n-1}$
 not divisible by $n$,  there is a nonempty $I\se\{1,\ldots,2n-1\}$
with $|I|\ls n$ such that $\sum_{i\in I}a_i$ is divisible by $n$ but not divisible by $n^2$.
\end{theorem}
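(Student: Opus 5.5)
The plan is to prove the lower bound $w_1(n)>2n-2$ by an explicit example, and the upper bound (with the extra condition $|I|\ls n$) by starting from the EGZ Theorem and then modifying the zero-sum set it produces.

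\emph{Lower bound.} Take $n-1$ copies of $1$ and $n-1$ copies of $n^2-1$; none of these is divisible by $n$. Suppose a subsequence uses $x$ copies of $1$ and $y$ copies of $n^2-1$, with $0\ls x,y\ls n-1$. Its sum is $x-y+yn^2$. This is divisible by $n$ only if $x\eq y\pmod n$, which forces $x=y$. In that case the sum equals $yn^2\eq0\pmod{n^2}$. Hence no $I$ works, so $w_1(n)\gs 2n-1$. The upper bound $w_1(n)\ls 2n-1$ follows from the ``moreover'' part, which is what remains.

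\emph{Upper bound.} Let $a_1,\ldots,a_{2n-1}$ be integers not divisible by $n$. Call a set $I$ \emph{good} if $n\mid\sum_{i\in I}a_i$ and $n^2\nmid\sum_{i\in I}a_i$. Suppose, for contradiction, that every $I$ with $1\ls|I|\ls n$ and $n\mid\sum_{i\in I}a_i$ in fact has $n^2\mid\sum_{i\in I}a_i$. I would use three tools.

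\begin{enumerate}[(i)]
\item \emph{EGZ.} The EGZ Theorem gives $J$ with $|J|=n$ and $n\mid\sum_{J}a_i$. By our assumption, $n^2\mid\sum_J a_i$.
\item \emph{Exchange.} Suppose $i\in J$ and $k\notin J$ satisfy $a_i\eq a_k\pmod n$ but $a_i\not\eq a_k\pmod{n^2}$. Then $J'=(J\sm\{i\})\cup\{k\}$ still has size $n$ and sum divisible by $n$. Its sum differs from $\sum_J a_i$ by $a_k-a_i\not\eq0\pmod{n^2}$, so $J'$ is good, a contradiction. Applying the same exchange to every zero-sum set of size at most $n$ shows we may assume the following: whenever two terms are congruent mod $n$ and one lies in some zero-sum set of size $\ls n$ that avoids the other, they are congruent mod $n^2$. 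The aim is to push this to ``congruence mod $n$ implies congruence mod $n^2$'' for the whole sequence, using that the $n-1$ terms outside $J$ can be fed into other zero-sum sets via the Davenport bound $D(\Z_n)=n$.
\item \emph{Repeated value.} If some value $c$ occurs $n$ times, those $n$ copies sum to $nc$. This is divisible by $n$ but not by $n^2$, since $n\nmid c$. So that set is good, a contradiction.
\end{enumerate}

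The main obstacle is the remaining reduced case. There, residues mod $n$ determine residues mod $n^2$, and no residue class mod $n$ has $n$ members, yet every short zero-sum set must still have sum divisible by $n^2$. Here I would try the following.

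\begin{itemize}
\item Write $a_i=r_i+nq_i$ with $r_i\in\{1,\ldots,n-1\}$. Then $q_i$ depends only on $r_i$.
\item A zero-sum set $I$ mod $n$ then satisfies $\sum_{i\in I} a_i/n=\sum_I r_i/n+\sum_I q_i$. The requirement is that this is always $\eq0\pmod n$.
\item Produce two short zero-sum sets, such as $J$ and one obtained from EGZ applied after a cyclic shift, or two minimal zero-sum subsets from prefix sums, whose values of $\sum r_i/n+\sum q_i$ differ by a nonzero amount mod $n$.
\item Since at least two distinct residues each have fewer than $n$ copies, the prefix-sum (Davenport) argument on a suitable ordering should yield a zero-sum set containing a copy of one residue class but not all copies. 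Comparing it with a shifted version gives the needed discrepancy.
\end{itemize}

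I expect this final counting step to need the most care.
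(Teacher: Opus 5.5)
Your lower bound is fine; it is the paper's example of $n-1$ copies each of $1$ and $-1$, up to congruence mod $n^2$. Steps (i)--(iii) are each valid. But the upper bound is not proved: everything after (iii) is a plan, and both halves of the plan fail as stated.

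First, the exchange only gives $a_i\equiv a_k\pmod{n^2}$ when some short zero-sum set contains one of the two terms but not the other, and congruent terms need not be separable in this way. For $n=6$, take $a_1=1$, $a_2=7$, five terms $\equiv 2$ and four terms $\equiv 4\pmod 6$. A set containing exactly one of $a_1,a_2$ has odd sum, so every zero-sum set contains both or neither. Nothing then forces $1\equiv 7\pmod{36}$, and your reduced case is never reached. The missing ingredient is a way to handle many terms lying in a proper subgroup $d\Z/n\Z$; here nine terms lie in $2\Z/6\Z$. The paper does this by induction on $n$. If at least $2|H|-1$ residues lie in a nontrivial proper subgroup $H=d\Z/n\Z$, divide those terms by $d$ and apply the theorem with modulus $|H|$. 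This yields a good set of size $\ls |H|<n$.

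Second, in the reduced case your discrepancy heuristic has no mechanism. There, exchanging one copy of a residue for another changes nothing mod $n^2$. Moreover, for $n>2$, the sequence of $n-1$ copies each of $1$ and $-1$ has every feature you invoke: lifts are determined by residues, and there are two classes, each with fewer than $n$ copies. Yet it has no good set. So the length $2n-1$ must enter essentially, and your sketch does not say how.

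The paper's mechanism is a dichotomy lemma: either the subgroup situation above occurs, or some $n-1$ terms $J$ have subset sums covering all of $\Z_n$. In the covering case, you first upgrade your hypothesis to \emph{all} nonempty zero-sum sets, by splitting any such set into minimal zero-sum blocks, each of size $\ls D(\Z_n)=n$. Then every nonempty subset sum of the remaining $n$ terms is $\equiv -S(J_0)\pmod{n^2}$ for some $J_0\subseteq J$. It is therefore determined mod $n^2$ by its residue mod $n$, so these sums take at most $n$ values.

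On the other hand, adding those $n$ terms one at a time strictly enlarges the set of subset sums mod $n^2$. Indeed, if $a_{i_k}$ stabilized the previous set, then $a_{i_k}$ and $(n+1)a_{i_k}$ would be nonempty subset sums with equal residues mod $n$, forcing $n\mid a_{i_k}$. So there are at least $n+1$ subset sums mod $n^2$, the empty one included. Hence no nonempty subset of these $n$ terms has sum $\equiv 0\pmod{n^2}$, which contradicts $D(\Z_n)=n$ together with the upgraded hypothesis.
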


\begin{theorem} \label{Th-2n-2} Let $n>1$ be an integer, and let
$a_1,\ldots,a_{2n-2}$ be integers relatively prime to $n$. Then, there is no nonempty $I\se\{1,\ldots,2n-1\}$ such that $\sum_{i\in I}a_i$
is divisible by $n$ but not divisible by $n^2$, if and only if
\begin{equation}\label{b}
\begin{aligned}&\ |\{1\ls i\ls 2n-2:\ a_i\eq b\pmod{n^2}\}|
\\=&\ |\{1\ls i\ls 2n-2:\ a_i\eq -b\pmod{n^2}\}|=n-1.
\end{aligned}
\end{equation}
for some integer $b$ relatively prime to $n$.
\end{theorem}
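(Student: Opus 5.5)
The ``if'' direction is a direct computation; the ``only if'' direction proceeds in three steps: first show that the class of $a_i$ modulo $n$ determines it modulo $n^2$, then convert the hypothesis into a condition on residues only, and finally pin down the residues. (The index set in the statement should be $\{1,\ldots,2n-2\}$.)

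\emph{The ``if'' direction.} Suppose (\ref{b}) holds. A nonempty $I$ selects $x$ terms $\eq b$ and $y$ terms $\eq -b\pmod{n^2}$, where $0\ls x,y\ls n-1$. Then $\sum_{i\in I}a_i\eq (x-y)b\pmod{n^2}$. Since $\gcd(b,n)=1$, divisibility by $n$ forces $n\mid x-y$. Because $|x-y|\ls n-1$, this gives $x=y$, so the sum is $\eq0\pmod{n^2}$. Hence no admissible $I$ exists.

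\emph{Step 1: lifts are determined by residues.} Assume there is no $I$ with $n\mid\sum_{i\in I}a_i$ and $n^2\nmid\sum_{i\in I}a_i$. The case $n=2$ is checked by hand, so let $n\gs3$. I will use the standard lemma that for units $u_1,\ldots,u_k$ of $\Z_n$, the set of subset sums (including $0$) has at least $\min\{n,k+1\}$ elements. Its proof: if $A\cup(A+u)=A$ for a unit $u$, then $A$ is a union of cosets of $\langle u\rangle=\Z_n$, so $A=\Z_n$.

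Now take $i\ne j$ with $a_i\eq a_j\pmod n$. The remaining $2n-4\gs n-1$ units have subset sums covering all of $\Z_n$. So some $T$ avoiding $i,j$ has $\sum_T a_t\eq -a_i\pmod n$. Both $T\cup\{i\}$ and $T\cup\{j\}$ then have sum $\eq0\pmod n$, hence both sums are $\eq0\pmod{n^2}$, which forces $a_i\eq a_j\pmod{n^2}$. Similarly, if $a_i\eq -a_j\pmod n$ then $n\mid a_i+a_j$, so $a_i\eq -a_j\pmod{n^2}$. Consequently $a_i=f(c_i)$ modulo $n^2$ for a lift function $f$ on the occurring residues $c_i=a_i\bmod n$, with $f(-c)\eq -f(c)\pmod{n^2}$.

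\emph{Step 2: reduction to residues.} The hypothesis now reads: every nonempty $I$ with $\sum_{I}c_i\eq0\pmod n$ has $\sum_I f(c_i)\eq0\pmod{n^2}$. I will use the copying trick suggested by Theorem \ref{Th-2n-1}. Append $a_{2n-1}:=a_k$ for some $k$. Theorem \ref{Th-2n-1} yields an admissible $I$, which must contain index $2n-1$. If $k\notin I$, swapping $2n-1$ for $k$ gives a forbidden set for the original sequence, so $k\in I$.

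Thus for every $k$ there is a set $J$ with $|J|\ls n-2$ and $k\notin J$ such that $2a_k+\sum_J a_j\eq0\pmod n$ but $\not\eq0\pmod{n^2}$. If $J\cup\{k\}$ could instead be completed by another element of residue $c_k$, Step 1 would give a contradiction. So every element of residue class $c_k$ already lies in $J\cup\{k\}$. Applying the same idea with $a_{2n-1}:=-a_k$, and with other small lifts of units, gives information on how many elements carry each residue.

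\emph{Step 3: the residue structure (main obstacle).} The goal is to show that the multiset $\{c_i\}$ is $n-1$ copies of some $b$ together with $n-1$ copies of $-b$. Combined with Step 1, this gives (\ref{b}).

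The plan is to normalize $c_1=1$ by multiplying by a unit, and write $f(c)=c+ng(c)$. The condition from Step 2 becomes a linear constraint modulo $n$ on $g$ over all zero-sum-mod-$n$ subsets. If the multiplicity structure is different, I expect to find two zero-sum-mod-$n$ subsets differing by a single exchange $\{c,d\}\leftrightarrow\{e\}$ with $c+d\eq e$ or a similar relation. Doing this for enough exchanges should force $g$ to be linear, that is, $f(c)\eq \lambda c\pmod{n^2}$ on the occurring residues. But a linear $f$ makes a lift of a full zero-sum such as $n\cdot 1$ (when $1$ occurs $n$ times, or a mixed zero sum of length $\gs n$) nonzero mod $n^2$, unless every zero-sum-mod-$n$ subset pairs $c$ with $-c$. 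This last condition should force the residues to be only $\pm b$, each appearing at most $n-1$ times, hence exactly $n-1$ times because there are $2n-2$ terms.

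Turning these exchange arguments into a complete case analysis is the step I expect to be hardest.
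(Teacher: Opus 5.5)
Your `if' direction and Step 1 are correct; Step 1 is the same completion trick the paper uses, applied to single elements. But there is a genuine gap: the `only if' direction is not proved, because Step 3, which carries all the difficulty, consists only of expectations.

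\emph{What fails.} You give no argument that zero-sum subsets differing by a single exchange $\{c,d\}\leftrightarrow\{e\}$ exist for an arbitrary residue configuration, let alone enough of them to make $f$ linear. The final inference is also wrong as stated. A linear lift annihilates zero-sums that do not pair $c$ with $-c$ (e.g.\ $1+1+(-2)=0$), so linearity of $f$ does not by itself force the residues to be $\pm b$. Step 2 does not advance matters either. The concrete consequence you extract from it (each residue class occurs at most $n-1$ times) already follows from Step 1, since $n$ terms in one class $c$ would sum to $nf(c)\not\equiv0\pmod{n^2}$.

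\emph{The missing idea.} Apply your completion trick to whole subsets of one half, and then use that the subset-sum lemma becomes tight in $\Z_{n^2}$. Write $S(I)=\sum_{i\in I}a_i$ and split the indices into $\{1,\ldots,n-1\}$ and $\{n,\ldots,2n-2\}$.
\begin{enumerate}[(1)]
\item The subset sums of the second half cover $\Z_n$. So two nonempty $I,I'\subseteq\{1,\ldots,n-1\}$ with $S(I)\equiv S(I')\pmod n$ can be completed by a common $J$, giving $S(I)\equiv S(I')\pmod{n^2}$.
\item Together with the hypothesis (a nonempty $I$ with $n\mid S(I)$ has $n^2\mid S(I)$), this shows that $\mathcal S_{n-1}=\{S(I)+n^2\Z:\ I\subseteq\{1,\ldots,n-1\}\}$ has at most $n$ elements.
\item Your own lemma, applied in $\Z_{n^2}$ to the units $a_1,\ldots,a_{n-1}$, gives at least $n$ elements.
\item Hence in the chain $\mathcal S_k=\mathcal S_{k-1}\cup(\mathcal S_{k-1}+a_k)$ every step adds exactly one element.
\item Inductively, $\mathcal S_{k-1}$ is a run of $k\ge2$ consecutive multiples of $a_1$ in $\Z_{n^2}$. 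Translating such a run by $a_k\equiv qa_1$ creates exactly one new element only when $q\equiv\pm1\pmod{n^2}$, so $a_k\equiv\pm a_1\pmod{n^2}$.
\item Letting the first half be any $n-1$ indices containing $1$ gives $a_k\equiv\pm a_1\pmod{n^2}$ for every $k$.
\item Neither sign class can contain $n$ terms, since their sum would be $\pm na_1\not\equiv0\pmod{n^2}$. So each class contains exactly $n-1$ terms.
\end{enumerate}
This replaces your Step 3 entirely, with no case analysis.
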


Motivated by Theorems \ref{Th-2n-1}-\ref{Th-2n-2} and the fact that $s(\Z_n^2)=4n-3$, we pose the following conjecture
based on our computation.

\begin{conjecture} Let $n>1$ be an integer.

{\rm (i)} For any $2n-1$ integers $a_1,\ldots,a_{2n-1}$ not divisible by $n$, there is an $I\se\{1,\ldots,2n-1\}$ with $|I|\in\{n,n+1\}$ such that $\sum_{i\in I}a_i$ is divisible by $n$
but not divisible by $n^2$.

{\rm (ii)} We have $w_2(n)=4n-3$.
\end{conjecture}
\begin{remark} It is interesting to study whether $w_r(n)=s(\Z_n^r)$ for any integers $n>1$ and $r>0$.
\end{remark}

\begin{definition} \label{Def1.2} Let $n>1$ and $r>0$ be integers.

{\rm (i)} Define $s_r(n)$ as the least positive integer $k$ such that for any $k$ ordered integer tuples
$${\bf a}_1=(a_{11},\ldots,a_{1r}),\ \ldots,\ {\bf a}_k=(a_{k1},\ldots,a_{kr})$$
not congruent to ${\bf 0}=(0,\ldots,0)$ modulo $n$, there is an $I\se\{1,\ldots,k\}$ with $|I|=n$
for which $$\sum_{i\in I}{\bf a}_i\eq{\bf 0}\pmod n
\ \ \t{but}\ \sum_{i\in I}{\bf a}_i\not\eq{\bf 0}\pmod {n^2}.$$

{\rm (ii)} Define $t_r(n)$ as the least positive integer $k$ such that for any $k$ ordered integer tuples
$${\bf a}_1=(a_{11},\ldots,a_{1r}),\ \ldots,\ {\bf a}_k=(a_{k1},\ldots,a_{kr})$$
with all the $a_{ij}\ (1\ls i\ls k,\ 1\ls j\ls r)$ relatively prime to $n$, there is an $I\se\{1,\ldots,k\}$ with $|I|=n$ for which $$\sum_{i\in I}{\bf a}_i\eq{\bf 0}\pmod n
\ \ \t{but}\ \sum_{i\in I}{\bf a}_i\not\eq{\bf 0}\pmod {n^2}.$$
\end{definition}

\begin{remark} In the spirit of Remark 1.1, we can extend our definitions of $s_r(n)$ and $t_r(n)$
to any finite abelian group $G$ with $|G|>1$.
\end{remark}

By Definition \ref{Def1.2}, we obviously have $s_r(n)\gs t_r(n)$ for any integers $n>1$ and $r>0$.
Note also that $s_1(p)=t_1(p)=s(p,1)$ for any prime $p$.
\medskip

Our third theorem provides lower and upper bounds for $s_r(n)$ and $t_r(n)$.

\begin{theorem}\label{Th-bound} Let $n>1$ and $r>0$ be integers.

{\rm (i)} We have
\begin{equation}\label{st<=} s_r(n)\ls n^{r+1}-n+1\ \ \t{and}\ \ t_r(n)\ls n\varphi(n)^r+1,
\end{equation}
where $\varphi$ is Euler's totien function.

{\rm (ii)} If $n>2$ then $s_r(n)\gs 2nr+1$. If $n$ is odd, then
\begin{equation}\label{t12} t_1(n)\gs 2n+1\ \ \t{and}\ \  t_2(n)\gs4n+1.
\end{equation}

{\rm (iii)} If $n\gs4$, then
\begin{equation}t_r(n)\ls s_r(n)\ls s(\Z_n^{2r}).
\end{equation}
When $n$ is even, we have
\begin{equation}\label{t<=}t_r(n)\ls n+s(\Z_{n/2}^r).
\end{equation}
\end{theorem}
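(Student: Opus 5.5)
\textbf{Part (i).} Both upper bounds follow from pigeonhole plus one observation. \emph{Observation:} if ${\bf a}_1,\ldots,{\bf a}_{n+1}$ are all congruent to the same ${\bf c}\not\eq{\bf 0}\pmod n$, then every $n$-subset has sum $\eq n{\bf c}\eq{\bf 0}\pmod n$. Write ${\bf S}$ for the sum of all $n+1$ vectors. If every $n$-subset sum ${\bf S}-{\bf a}_j$ were $\eq{\bf 0}\pmod{n^2}$, then all ${\bf a}_j$ would be congruent to one ${\bf b}$ modulo $n^2$. Then any $n$ of them sum to $n{\bf b}\pmod{n^2}$, and $n{\bf b}\not\eq{\bf 0}\pmod{n^2}$ because ${\bf b}\eq{\bf c}\not\eq{\bf 0}\pmod n$. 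This is a contradiction.
\begin{itemize}
\item For $s_r(n)$ there are $n^r-1$ admissible residue classes modulo $n$. Among $n(n^r-1)+1=n^{r+1}-n+1$ vectors, some class contains $n+1$ of them, and the observation applies.
\item For $t_r(n)$ there are only $\varphi(n)^r$ classes with all coordinates prime to $n$. Hence $n\varphi(n)^r+1$ vectors suffice.
\end{itemize}

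\textbf{Part (ii).} Here I need explicit extremal configurations. For $s_r(n)\gs 2nr+1$ I would first build, for $r=1$, a multiset of $2n$ nonzero residues of small absolute value, for instance $n-1$ copies each of $1$ and $-1$ together with $2$ and $-2$ (using $n>2$). The aim is that every $n$-term sum divisible by $n$ is forced to be exactly $0$, hence divisible by $n^2$. For general $r$ I would place one such block on each coordinate axis, $\{x{\bf e}_j\}$. An $n$-subset then splits into per-coordinate pieces. Each piece has fewer than $n$ (or at most $n$) elements, and its coordinate sum must be $\eq0\pmod n$ with absolute value below $n^2$. The goal is to show this forces each coordinate sum to be $0$.

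For $t_1(n)\gs 2n+1$ and $t_2(n)\gs 4n+1$ with $n$ odd, I would use units such as $\pm1,\pm2$ in the same fashion. For $t_2(n)$ I would use pairs like $(1,1),(1,-1),(-1,1),(-1,-1)$ with suitable multiplicities, and oddness of $n$ would rule out sums equal to $\pm n$ by parity. Checking that each chosen multiset really admits no good $n$-subset is the fiddly part. I expect it to be a finite case analysis on the numbers of each entry used, and I would tune the multiplicities until it closes.

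\textbf{Part (iii).} For $s_r(n)\ls s(\Z_n^{2r})$ the plan is to lift each ${\bf a}_i$ to an element of $\Z_n^{2r}$ that records ${\bf a}_i$ modulo $n$ together with extra information. The extra information should be chosen so that a zero-sum $n$-subsequence in $\Z_n^{2r}$, possibly after exchanging one element, yields a sum $\eq{\bf 0}\pmod n$ but not modulo $n^2$.
\begin{itemize}
\item \emph{Many repeats.} If some residue class modulo $n$ contains $n+1$ of the vectors, part (i)'s observation finishes immediately.
\item \emph{Otherwise.} Use the second $r$ coordinates to find a zero-sum $n$-subset $I$ modulo $n$. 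If its sum is $\eq{\bf 0}\pmod{n^2}$, find a second zero-sum $n$-subset $J$ (from the length $s(\Z_n^{2r})\gs 4n-3$ supply) differing in a controlled way, so that not both can vanish modulo $n^2$.
\end{itemize}
The hypothesis $n\gs4$ is presumably needed so that the remaining supply is long enough for this exchange. This exchange step is the main obstacle, and the precise encoding is what I would experiment with first.

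For even $n$ and $t_r(n)\ls n+s(\Z_{n/2}^r)$: all coordinates are odd. I would first take $n$ of the vectors, or use EGZ in $\Z_{n/2}^r$ on halved differences, to produce two disjoint blocks of size $n/2$ whose sums agree modulo $n/2$. I would then combine blocks to get an $n$-subset with sum $\eq{\bf 0}\pmod n$, using parity of odd coordinates to adjust modulo $n$. Finally I would use one spare vector to swap and break divisibility by $n^2$ if needed.
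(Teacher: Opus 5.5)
Part (i) is correct and is the paper's argument. The genuine gap is part (iii). You have no working mechanism for either inequality there, and the exchange step you flag as the main obstacle is not how these inequalities are proved.

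For $s_r(n)\le s(\Z_n^{2r})$ the missing idea is the encoding. Write $a_{ij}=nq_{ij}+b_{ij}$ with $0\le b_{ij}\le n-1$, and apply $s(\Z_n^{2r})$ to the vectors $(q_{i1},\dots,q_{ir},b_{i1},\dots,b_{ir})$. The resulting $I$ with $|I|=n$ satisfies $\sum_{i\in I}a_{ij}\equiv\sum_{i\in I}b_{ij}\pmod{n^2}$, because $n\mid\sum_{i\in I}q_{ij}$. It also satisfies $n\mid\sum_{i\in I}b_{ij}$. Now suppose $\sum_{i\in I}{\bf a}_i\equiv{\bf 0}\pmod{n^2}$. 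Since $0\le\sum_{i\in I}b_{ij}\le n(n-1)<n^2$, every $b_{ij}$ with $i\in I$ must vanish, which contradicts ${\bf a}_i\not\equiv{\bf 0}\pmod n$. There is no exchange and no split on repeated classes. Also, $n\ge4$ is never used, so your guess that it feeds an exchange step sends you looking for the wrong thing.

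For even $n=2m$, ``two blocks plus a one-vector swap'' fails. Replacing ${\bf a}_y$ by ${\bf a}_x$ shifts the sum by ${\bf a}_x-{\bf a}_y$, which need not be $\equiv{\bf 0}\pmod n$, so the congruence you built is lost. The paper writes ${\bf a}_i=2{\bf q}_i+{\bf 1}$ and uses $k\ge 2m+s(\Z_m^r)$ to extract \emph{three} disjoint $m$-sets $I_1,I_2,I_3$ with $\sum_{i\in I_j}{\bf q}_i\equiv{\bf 0}\pmod m$. Then $\sum_{i\in I_j}{\bf a}_i=m{\bf u}_j$, where every coordinate of ${\bf u}_j$ is odd. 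Each union $I_i\cup I_j$ has size $n$ and sum $m({\bf u}_i+{\bf u}_j)\equiv{\bf 0}\pmod n$. The three vectors ${\bf u}_i+{\bf u}_j$ cannot all be $\equiv{\bf 0}\pmod 4$: that would give ${\bf u}_2\equiv{\bf u}_3\pmod 4$, hence $2{\bf u}_2\equiv{\bf 0}\pmod 4$, impossible for odd coordinates. A pair with ${\bf u}_i+{\bf u}_j\not\equiv{\bf 0}\pmod 4$ then gives $m({\bf u}_i+{\bf u}_j)\not\equiv{\bf 0}\pmod{4m^2}$, as required.

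In part (ii), your axis construction ($n-1$ copies each of $\pm1$, plus $\pm2$) does work for all $n>2$; the paper instead uses $c=n-1$ when $n$ is odd. A piece of at most $n$ entries with sum $\pm n$ must consist of $\pm2$ and $n-2$ copies of $\pm1$ of the same sign, so it has exactly $n-1$ elements. The one remaining vector $x{\bf e}_{j'}$ then sits alone on another axis, with $x\not\equiv0\pmod n$. For odd $n$ the same block also gives $t_1(n)\ge2n+1$.

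However, you never produce the $t_2(n)$ example. With entries $\pm1$ only you are capped at $4n-4$ vectors, since $n$ copies of ${\bf v}$ sum to $n{\bf v}\not\equiv{\bf 0}\pmod{n^2}$. The paper adds $(\pm2,\pm2)$ once each. Both coordinates of an $n$-sum then have the parity of the number of $(\pm1,\pm1)$ vectors used. So if the sum is $\equiv{\bf 0}\pmod n$, either both coordinates are $0$ or both are $\pm n$, and the latter is ruled out by counting.
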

\begin{remark} For any integer $n\gs4$, as $s(\Z_n^2)=4n-3$ we have $s_1(n)\ls 4n-3$,
which was pointed out by Gao et al. \cite{G26} when $n$ is prime.
For any $n,d\in\Z^+$, H. Harborth \cite{H} proved that $s(\Z_n^d)\ls (n-1)n^d+1$,
and the upper bound was improved to $c_d n$ by N. Alon and M. Dubiner \cite{AD}, where $c_d$
is a positive constant only depending on $d$.
\end{remark}

Our fourth theorem gives some exact values of $s_r(n)$ and $t_r(n)$.

\begin{theorem}\label{Th-value} {\rm (i)} For any $r\in\Z^+$, we have
\begin{equation} s_r(2)=2^{r+1}-1,\ t_r(2)=3,\ t_2(3)=3\times 2^r+1.
\end{equation}
Also, for any integer $r>1$ we have
\begin{equation}3\times2^r+1\ls s_r(3)\ls 2\times 3^r-1.
\end{equation}

{\rm (ii)} Let $n$ be a positive even integer. Then
\begin{equation}t_1(n)=2n-1\ \ \t{and}\ \ t_2(n)=3n-3.
\end{equation}
\end{theorem}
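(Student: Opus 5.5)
The plan is to treat the parts of the statement separately. Each upper bound comes from a pigeonhole argument on residue classes modulo $n$, refined by looking modulo $n^2$, and each lower bound comes from an explicit symmetric construction.

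\emph{The case $n=2$.} Here $|I|=2$, and $\mathbf a_i+\mathbf a_j\equiv\mathbf 0\pmod 2$ means $\mathbf a_i\equiv\mathbf a_j\pmod 2$. There are $2^r-1$ nonzero classes modulo $2$. Suppose three vectors $\mathbf x,\mathbf y,\mathbf z$ lie in one class $\mathbf v\not\equiv\mathbf 0$ and all pairwise sums vanish modulo $4$. Then $\mathbf x\equiv-\mathbf y\equiv\mathbf z\equiv-\mathbf x\pmod 4$, so $2\mathbf x\equiv\mathbf 0\pmod 4$. This is impossible, since some coordinate of $\mathbf x$ is odd. Hence a bad sequence has at most $2$ terms per class, so length at most $2^{r+1}-2$. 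Taking $\mathbf x_{\mathbf v}$ and $-\mathbf x_{\mathbf v}$ for each class $\mathbf v$ attains this bound, because $\mathbf x_{\mathbf v}\not\equiv-\mathbf x_{\mathbf v}\pmod 4$. This gives $s_r(2)=2^{r+1}-1$. When all coordinates are odd there is only the one class $(1,\ldots,1)$, so three vectors suffice, while the pair $\mathbf x,-\mathbf x$ shows $t_r(2)=3$.

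\emph{The case $n=3$ with unit coordinates.} I read the second formula in (i) as $t_r(3)=3\times2^r+1$. Since each coordinate is $\pm1\pmod 3$, a sum of three is $\equiv0\pmod 3$ coordinatewise iff the three vectors agree modulo $3$, and there are $2^r$ such classes. Write each member of a class $\mathbf v$ as $\mathbf v+3\mathbf y$ with $\mathbf y\in\Z_3^r$. A triple is then bad iff $\mathbf y_1+\mathbf y_2+\mathbf y_3\equiv-\mathbf v\pmod 3$.

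Suppose four members of one class have all four triple sums equal to $-\mathbf v$. Subtracting triple sums shows all four $\mathbf y$'s coincide, so $-\mathbf v\equiv 3\mathbf y\equiv\mathbf 0$, a contradiction. So any $3\times2^r+1$ vectors (hence $4$ in some class) contain a good triple. Conversely, taking $\mathbf y=\mathbf 0,\mathbf 0,-\mathbf v$ in every class gives a bad sequence of length $3\times 2^r$. The lower bound $s_r(3)\ge t_r(3)$ is then immediate.

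For $s_r(3)\le 2\times3^r-1$, the same four-element argument works for any nonzero class $\mathbf v$, so we may assume every class has at most $3$ members. I would then use a toggling observation. If a triple from three distinct classes $\mathbf u,\mathbf w,-\mathbf u-\mathbf w$ sums to $\mathbf 0$ modulo $9$, replacing a member of $\mathbf u$ by another member distinct modulo $9$ destroys this. Counting then forces a good triple: with $2(3^r-1)+1$ terms, many classes have at least $2$ members, and a class with two members congruent modulo $9$ cannot have a third member under badness. This last combinatorial step, showing that these constraints cannot all be satisfied at length $2\times3^r-1$, is where I expect the main obstacle. I would first try to pair classes $\mathbf v,-\mathbf v$ and exploit triples $\mathbf u,\mathbf u,-2\mathbf u=\mathbf u$ together with cross-class triples.

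\emph{The case $n$ even.} The upper bounds follow from \eqref{t<=} together with $s(\Z_{n/2})=n-1$ (EGZ) and $s(\Z_{n/2}^2)=2n-3$ (Reiher). These give $t_1(n)\le 2n-1$ and $t_2(n)\le 3n-3$.

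For $t_1$, take $n-1$ copies each of $1$ and $-1$. A sum of $n$ of them with $k$ ones equals $2k-n$. This is divisible by $n$ with $1\le k\le n-1$ only when $k=n/2$, and then the sum is $0$.

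For $t_2$, take $(1,1)$ and $(-1,-1)$ each $n-1$ times, and $(1,-1)$ and $(-1,1)$ each $n/2-1$ times, for $3n-4$ terms in total. Let $a,b,c,d$ be the numbers of each type chosen. Both coordinates of the sum vanish modulo $n$ exactly when $(a-b)\pm(c-d)\equiv0\pmod n$. This gives $2(c-d)\equiv0\pmod n$ with $|c-d|\le n/2-1$, so $c=d$. Then $a-b\equiv0\pmod n$ with $|a-b|\le n-1$, so $a=b$, and the sum is exactly $\mathbf 0$. Hence $t_2(n)\ge 3n-3$.
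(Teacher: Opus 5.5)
\paragraph{Parts that are correct.} For $n=2$ and for $t_r(3)=3\times2^r+1$ (you rightly read the misprint $t_2(3)$ as $t_r(3)$), your three- and four-element arguments are Lemma \ref{Lem2.1} specialised, and your extremal sequences are the paper's. In part (ii) you use the paper's upper bound $t_r(n)\ls n+s(\Z_{n/2}^r)$ together with EGZ and Reiher. Your $t_2$ example is the paper's with the second coordinate negated, and your $2(c-d)\eq0\pmod n$ check is more complete than the paper's one-line justification. One small slip: in the $s_r(2)$ construction, what matters is $\mathbf x_{\mathbf v}+(-\mathbf x_{\mathbf v})=\mathbf 0$ and that vectors from different classes never sum to $\mathbf 0$ modulo $2$. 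The fact $\mathbf x_{\mathbf v}\not\eq-\mathbf x_{\mathbf v}\pmod 4$ plays no role.

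\paragraph{The gap: $s_r(3)\ls 2\times3^r-1$ is not proved.} You stop exactly where the work is. Moreover, one fact you plan to use is false: a class containing two members congruent modulo $9$ can have a third member in a bad sequence, e.g.\ $\mathbf v,\mathbf v,-2\mathbf v$, which is your own extremal example for $t_r(3)$. Constraints inside one class only give $m(\mathbf x)\ls3$, hence $l\ls3(3^r-1)$. Pairing $\mathbf v$ with $-\mathbf v$ adds nothing, since no triple mixing these two classes sums to $\mathbf 0$ modulo $3$.

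Your toggling observation is the right ingredient, but you must extract from it a local bound on \emph{lines}. Let $\mathbf u,\mathbf v,\mathbf w$ be distinct nonzero classes with $\mathbf u+\mathbf v+\mathbf w\eq\mathbf 0\pmod 3$, all three occupied. Fix $\mathbf a_j$ in class $\mathbf v$ and $\mathbf a_k$ in class $\mathbf w$. Every member $\mathbf x$ of class $\mathbf u$ satisfies $\mathbf x+\mathbf a_j+\mathbf a_k\eq\mathbf 0\pmod 3$, so badness forces all members of $\mathbf u$ to be congruent modulo $9$. Three of them would then sum to $3\mathbf x\not\eq\mathbf 0\pmod 9$, so $m(\mathbf u)\ls 2$. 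Hence $m(\mathbf u)+m(\mathbf v)+m(\mathbf w)\ls6$ on every line; this is trivial if some class on the line is empty, since each $m\ls3$.

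The paper then double counts. There are $(3^r-1)(3^r-3)/6$ lines, and each nonzero class lies on $(3^r-3)/2$ of them. So $\frac{3^r-3}{2}\,l\ls(3^r-1)(3^r-3)$, i.e.\ $l\ls2(3^r-1)$ for $r>1$.

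A shortcut also works. If $l\gs2\times3^r-1$, some class $\mathbf u$ has exactly three members. By the local argument, each of the $(3^r-3)/2$ lines through $\mathbf u$ (which partition the classes other than $\mathbf 0,\pm\mathbf u$) contains an empty class. This gives $l\ls 6+3(3^r-3)/2<2\times3^r-1$ for $r\gs2$. Without an argument of this kind, the upper bound for $s_r(3)$ in part (i) remains open in your proposal.
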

\begin{remark} It is easy to see that $s_r(3)=2\times3^r+1$ for $r=1,2$ but
$s_3(3)>3\times2^3+1=25$.
\end{remark}

Based on Theorem \ref{Th-bound} and our computation, we pose the following conjecture.

\begin{conjecture}\label{Conj1.2} Let $n>2$ be an integer. Then
\begin{equation}\label{st}
s_1(n)=2n+1\ \ \t{and}\ \ s_2(n)=4n+1.
\end{equation}
\end{conjecture}
\begin{remark} For any odd integer $n>2$,  \eqref{st} and \eqref{t12}
together imply that $t_1(n)=2n+1$ and $t_2(n)=4n+1$.
\end{remark}

Now we state our fifth theorem.

\begin{theorem} \label{Th1.2} Let $n\gs2$ be an integer and let $a_1,\ldots,a_{2n+\da_n}$ be integers not divisible by $n$, where $\da_n\in\{\pm1\}$ is given by
$$\da_n=\begin{cases}-1&\t{if}\ 2\mid n\ \t{and}\ a_1\eq\cdots\eq a_{2n-1}\pmod 2,
\\1&\t{otherwise}.\end{cases}$$
Suppose that
 $$|\{1\ls i\ls 2n+\da_n:\ a_i\eq r\pmod n\}|\gs n$$
 for some $r\in\{1,\ldots,n-1\}$.
 Then, for some $I\se\{1,\ldots,2n+\da_n\}$ with $|I|=n$, the sum $\sum_{i\in I}a_i$
 is divisible by $n$ but not divisible by $n^2$.
 \end{theorem}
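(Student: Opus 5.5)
The plan is to work with the index set $A=\{i:\ a_i\eq r\pmod n\}$, where $r$ is as in the hypothesis, so $|A|\gs n$, and its complement $B$. Every $n$-subset of $A$ has sum $\eq nr\eq0\pmod n$, so it is enough to find one $n$-subset, possibly after swapping a few elements of $A$ for elements of $B$, whose sum is not divisible by $n^2$.

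\emph{Step 1: the case $|A|\gs n+1$.} Suppose every $n$-subset of $A$ has sum $\eq0\pmod{n^2}$. Take two $n$-subsets of $A$ that differ by exchanging one index $i$ for another index $j$. Subtracting their sums gives $a_i\eq a_j\pmod{n^2}$, and such exchanges connect any two indices of $A$. So all $a_i$ with $i\in A$ are congruent to a single $c$ modulo $n^2$. Then $nc\eq0\pmod{n^2}$, hence $n\mid c$, which contradicts $c\eq r\not\eq0\pmod n$. The same argument gives a general principle: if $K$ ranges over all $k$-subsets of $A$ with $1\ls k<|A|$ and $\sum_{i\in K}a_i$ is constant modulo $n^2$, then all $a_i$ with $i\in A$ are congruent modulo $n^2$.

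\emph{Step 2: the case $|A|=n$.} Assume $S_A=\sum_{i\in A}a_i\eq0\pmod{n^2}$, since otherwise we are done. For $j\in B$ put $b_j=a_j-r$; then $b_j\not\eq0\pmod n$. I would look for a nonempty $J\se B$ with $k=|J|<n$ and $\sum_{j\in J}b_j\eq0\pmod n$. Given such a $J$, every $k$-subset $K\se A$ satisfies $\sum_{J}a_j\eq kr\eq\sum_K a_i\pmod n$. Exchanging $K$ for $J$ gives the $n$-set $(A\sm K)\cup J$, whose sum is divisible by $n$ and is congruent to $\sum_J a_j-\sum_K a_i$ modulo $n^2$. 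If this were $\eq0\pmod{n^2}$ for every such $K$, the principle from Step 1 (with $k<n=|A|$) would make all $a_i$ ($i\in A$) congruent to some $c$ modulo $n^2$. Combined with $S_A\eq0\pmod{n^2}$, this gives the same contradiction as in Step 1. The remaining task is to produce $J$.

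\emph{Case $\da_n=-1$.} Here $n$ is even and $a_1,\ldots,a_{2n-1}$ all have the same parity. Since $n$ is even, $r$ has that same parity, so every $b_j$ is even. The residues $b_j/2$ are nonzero in $\Z_{n/2}$, and there are $|B|=n-1\gs n/2=D(\Z_{n/2})$ of them. Hence there is a zero-sum $J$ with $|J|\ls n/2<n$.

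\emph{Case $\da_n=1$.} Here $|B|=n+1\gs n=D(\Z_n)$, so a zero-sum subsequence exists. The main obstacle is when every minimal zero-sum subsequence of $(b_j)_{j\in B}$ has length exactly $n$. I would then use the standard structure result: a zero-sum free sequence of length $n-1$ in $\Z_n$ consists of a single generator $g$ repeated. Apply it to $n-1$ of the $b_j$'s. Each of the two remaining elements, if different from $g$, would give a shorter zero-sum with some of the $g$'s. So all $b_j\eq g$, and all $n+1$ elements of $B$ lie in the single residue class $r+g\pmod n$. This class is nonzero because $n\nmid a_j$. Step 1, applied to that class in place of $r$, then finishes the proof.
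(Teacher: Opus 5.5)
Your proof is correct. For $\delta_n=1$ it is essentially the paper's argument. If $n+1$ indices lie in one nonzero class, the exchange argument for $n+1$ congruent terms finishes. Otherwise the characterization of zero-sum free sequences of length $n-1$ gives a nonempty $K$ outside the $r$-class with $\sum_{k\in K}(a_k-r)\eq0\pmod n$. Exchanging $|K|$ elements of the $r$-class for $K$ then forces the $r$-class to be constant modulo $n^2$, a contradiction. The paper splits according to whether the complement of the chosen $n$-set meets at least two residue classes, while you split according to whether a short zero-sum of the $b_j$ exists. The ingredients are the same.

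The genuine difference is the case $\delta_n=-1$, which the paper treats in two pieces. If the $n-1$ remaining terms are not all in one class, it again uses the zero-sum free characterization. If they all lie in a single class $c\neq r$, it runs a separate computation: with $d=\gcd(r-c,n)$ and $n'=n/d$, it uses exchanges of sizes $n'$ and $2n'$ to show both blocks are constant modulo $n^2$, and combines the two resulting congruences into $nc'\eq0\pmod{n^2}$. You instead observe that every $b_j=a_j-r$ is even and apply $D(\Z_{n/2})=n/2\ls n-1$. This gives a zero-sum $J$ with $|J|\ls n/2<n$, so the single exchange argument of your Step 2 covers every configuration of the complement at once. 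Your route is shorter and more uniform. It also shows that the paper's single-class subcase needs no special treatment: there $\gcd(c-r,n)\gs2$, so the zero-sum free characterization already yields a short zero-sum. The paper's explicit computation only adds concrete exchange sizes; logically your argument subsumes it.
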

 \begin{remark} When $n$ is an odd prime,
 Theorem \ref{Th1.2} was essentially obtained in the proof of
  \cite[Lemma 3.4]{G25} via the Cauchy-Davenport theorem
  (cf. M. B. Nathanason \cite[p.\,44]{N96}). Our proof of Theorem \ref{Th1.2}
  uses a different approach which also works when $n$ is composite.
 \end{remark}

Our last theorem is as follows.

\begin{theorem}\label{Th1.3} Let $p$ be an odd prime, and let $a_1,\ldots,a_{2p+1}$
be integers not divisible by $p$ with
$$\max_{1\ls r\ls p-1}|\{1\ls i\ls 2p+1:\ a_i\eq r\pmod p\}|\ls p-1.$$
Suppose that $p\|(a_s+a_t-a_u-a_v)$ for some $s,t,u,v\in\{1,\ldots,2p+1\}$ with $s\not=t$ and $u\not=v$. Then, for some $I\se\{1,\ldots,2p+1\}$ with $|I|=p$ we have $p\|\sum_{i\in I}a_i$,
unless
$$\max_{1\ls r\ls p-1}|\{1\ls i\ls 2p+1:\ i\not=s,t,u,v\ \t{and}\ a_i\eq r\pmod p\}|=p-1$$
and
$$|\{a_i+p\Z:\ i\in\{1,\ldots,2p+1\}\sm\{s,t,u,v\}\}|\gs3.$$
\end{theorem}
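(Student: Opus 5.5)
We want an index set $I$ with $|I|=p$ and $p\|\sum_{i\in I}a_i$. The idea is to find two $p$-subsets whose sums are both divisible by $p$ and differ by $\pm(a_s+a_t-a_u-a_v)$; since that difference is divisible by $p$ exactly once, the two sums cannot both be divisible by $p^2$, so one of them works. Concretely, write $J=\{1,\ldots,2p+1\}\sm\{s,t,u,v\}$; then $|J|\gs 2p-3$, and $|J|=2p-3$ when $s,t,u,v$ are distinct. We look for $K\se J$ with $|K|=p-2$ such that $\sum_{i\in K}a_i+a_s+a_t\eq0\pmod p$. Then $a_u+a_v\eq a_s+a_t\pmod p$ gives $\sum_{i\in K}a_i+a_u+a_v\eq 0\pmod p$ as well. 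The sets $K\cup\{s,t\}$ and $K\cup\{u,v\}$ have size $p$ and their sums differ by $a_s+a_t-a_u-a_v$, so one of them is exactly divisible by $p$. If $\{s,t\}$ and $\{u,v\}$ overlap, the common index simply cancels and the same argument applies.

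Everything therefore reduces to a zero-sum problem modulo $p$. Among the residues $a_i+p\Z$ ($i\in J$), all nonzero and numbering at least $2p-3$, we need $p-2$ of them whose sum is the prescribed class $c=-(a_s+a_t)$. I would attack this with Cauchy--Davenport, iterated as in the Dias da Silva--Hamidoune style arguments used in \cite{G25}. Since no residue class occurs more than $p-1$ times, the multiset can be partitioned into $p-2$ nonempty sets $S_1,\ldots,S_{p-2}$ of distinct residues, taken from the distinct values in $J$. The sumset $S_1+\cdots+S_{p-2}$ then has size at least $\min\{p,\sum|S_j|-(p-3)\}$. If $\sum|S_j|=|J|\gs 2p-3$, this gives $\sum|S_j|-(p-3)\gs p$, so the sumset is all of $\Z_p$ and in particular contains $c$.

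The partition exists precisely when the maximal multiplicity in $J$ is at most $p-2$. So the only failure is when some residue occurs $p-1$ times in $J$, and then the remaining $p-2$ or so elements of $J$ must be handled by hand. Suppose $r$ occurs $p-1$ times in $J$ and the other elements of $J$ take at most one further value. Then $K$ can be chosen as $j$ copies of $r$ together with $p-2-j$ copies of that other value $r'$. As $j$ varies, $jr+(p-2-j)r'$ runs over a long arithmetic progression with difference $r-r'\neq0$. We need it to hit $c$; this is the non-excluded case corresponding to $|\{a_i+p\Z: i\in J\}|\ls 2$. In the degenerate situations where the progression is too short to cover $c$, I expect one can instead swap the roles of $(s,t)$ and $(u,v)$ with indices in $J$, using the global hypothesis that multiplicities are at most $p-1$ on the whole set.

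I expect the main obstacle to be the boundary cases: the count $|J|=2p-3$ is exactly tight for Cauchy--Davenport, and when some residue has multiplicity $p-1$ in $J$ with only two distinct values, the progression argument must be checked carefully. The hypothesis ``at least $3$ distinct values'' in the exceptional clause is what marks where this reasoning breaks down.
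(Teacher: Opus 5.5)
Your reduction, the Cauchy--Davenport case (every residue occurs at most $p-2$ times off $\{s,t,u,v\}$), and the two-residue progression argument match the paper's Case 1 and the first half of its Case 2. The gap is the step you defer with ``I expect one can instead swap the roles\ldots''. Suppose the indices outside $\{s,t,u,v\}$ carry $p-1$ copies of a residue $r$ and otherwise only a residue $r'\neq r$. Then any $p-2$ of them sum to $jr+(p-2-j)r'$ with $0\le j\le p-2$. These are $p-1$ distinct classes, and the one class missed (it would need $j=p-1$) is exactly $-(r+r')$. So when $a_s+a_t\equiv r+r'\pmod p$, there is \emph{no} $(p-2)$-set $K$ in the complement with $\sum_{i\in K}a_i\equiv-(a_s+a_t)$. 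Your scheme of comparing $K\cup\{s,t\}$ with $K\cup\{u,v\}$ then yields nothing. This subcase is not covered by the exceptional clause, since the complement has only two residues. Your proposal therefore leaves an essential part of the theorem unproved.

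Your hint about swapping with indices of the complement and using the global multiplicity bound points the right way. The missing idea is to put an element of $\{u,v\}$ (resp.\ $\{s,t\}$) into the $(p-2)$-set, and to compare both pairs with an auxiliary pair $\{i_1,i_2\}$ from the complement, where $a_{i_1}\equiv r$ and $a_{i_2}\equiv r'$.

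\begin{enumerate}
\item Since $r$ already occurs $p-1$ times off $\{s,t,u,v\}$, the global bound gives $a_s,a_t,a_u,a_v\not\equiv r$. Then $a_u+a_v\equiv a_s+a_t\equiv r+r'$ forces $a_s,a_t,a_u,a_v\not\equiv r'$ as well.
\item Relabel so that $v\notin\{s,t\}$, which is possible since $\{u,v\}\neq\{s,t\}$. Choose $k\in\{0,\ldots,p-1\}$ with $(k+1)(r'-r)\equiv r-a_v\pmod p$. Because $a_v\not\equiv r,r'$, one gets $0\le k\le p-3$.
\item Let $L$ consist of $v$, together with $p-3-k$ further indices of residue $r$ and $k$ further indices of residue $r'$, all different from $i_1,i_2$. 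Then $L$ is a $(p-2)$-set with $\sum_{i\in L}a_i\equiv-(r+r')\pmod p$.
\item Hence $L\cup\{s,t\}$ and $L\cup\{i_1,i_2\}$ are $p$-sets whose sums are divisible by $p$.
\item Build $L'$ the same way from an index of $\{s,t\}\setminus\{u,v\}$. This gives $p$-sets $L'\cup\{u,v\}$ and $L'\cup\{i_1,i_2\}$, again with sums divisible by $p$.
\item If all four sums were divisible by $p^2$, then $a_s+a_t\equiv a_{i_1}+a_{i_2}\equiv a_u+a_v\pmod{p^2}$, contradicting $p\,\|\,(a_s+a_t-a_u-a_v)$.
\end{enumerate}

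This transitivity through a third pair, with an element of $\{u,v\}$ or $\{s,t\}$ whose residue avoids $\{r,r'\}$ placed inside the $(p-2)$-set, is what your proof needs and does not supply.
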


We are going to prove Theorems \ref{Th-2n-1}-\ref{Th-2n-2}, Theorems \ref{Th-bound}-\ref{Th-value} and
Theorems \ref{Th1.2}-\ref{Th1.3} in Sections 2, 3 and 4, respectively.

\section{Proofs of Theorems \ref{Th-2n-1} and \ref{Th-2n-2}}
\setcounter{lemma}{0}
\setcounter{theorem}{0}
\setcounter{corollary}{0}
\setcounter{remark}{0}
\setcounter{equation}{0}

\begin{lemma} \label{Lem-H} Let $n>1$ be an integer and let $x_1,\ldots,x_{2n-1}$
be nonzero element of the cyclic group $\Z_n=\Z/n\Z$. Then, either
$$\l\{\sum_{i\in I}x_i:\ I\se J\r\}=\Z_n$$
for some $J\se\{1,\ldots,2n-1\}$ with $|J|=n-1$, or there is a nontrivial proper subgroup
$H$ of $\Z_n$ with $|\{1\ls i\ls 2n-1:\ x_i\in H\}|\gs 2|H|-1$.
\end{lemma}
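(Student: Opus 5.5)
We argue greedily, building the index set one step at a time and tracking the set of subset sums. For $J\se\{1,\ldots,2n-1\}$ put $\Sigma(J)=\{\sum_{i\in I}x_i:\ I\se J\}$, which always contains $0$. The basic step is the following. If $S\se\Z_n$ is a set with $0\in S$ and $x\ne0$, then $S\cup(S+x)$ is strictly larger than $S$ unless $S+x=S$. The condition $S+x=S$ says that $S$ is a union of cosets of $\langle x\rangle$, and since $0\in S$ it forces $\langle x\rangle\se S$.

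We build $J$ step by step. Start from $J_0=\emptyset$, so $|\Sigma(J_0)|=1$. At each stage we have $|J_k|=k$ and $|\Sigma(J_k)|\gs k+1$. If some unused index $i$ satisfies $x_i\notin\mathrm{Stab}(\Sigma(J_k))$, we add it, and then $|\Sigma(J_{k+1})|\gs k+2$. If we reach $k=n-1$, then $|\Sigma(J_{n-1})|\gs n$, so $\Sigma(J_{n-1})=\Z_n$ and the first alternative holds.

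Now suppose the process gets stuck at some stage $k\ls n-2$ with $S=\Sigma(J_k)$. Then every remaining $x_i$, with $i\notin J_k$, lies in the stabilizer $H=\{h\in\Z_n:\ S+h=S\}$. This $H$ is a subgroup, and it is nontrivial, since otherwise no $x_i$ (all nonzero) could lie in it. The count of indices outside $J_k$ is $2n-1-k$, and all of them have $x_i\in H$.

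We must show that $H$ is proper and that $|\{i:\ x_i\in H\}|\gs 2|H|-1$. Properness is immediate: if $H=\Z_n$ then $S=\Z_n$, but $|S|\ls 2^k$ is not the relevant bound here. Instead, note that $S=\Z_n$ would already give the first alternative after padding $J_k$ arbitrarily up to size $n-1$, since subset sums only grow. So we may assume $S\ne\Z_n$, hence $H\ne\Z_n$.

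For the count we have $2n-1-k\gs n+1$, which need not reach $2|H|-1$ when $|H|$ is close to $n/2$. So we refine. Since $S$ is a union of $H$-cosets containing $0$, we get $|S|\gs|H|$. More usefully, we run the greedy in the quotient. Whenever the current $S$ is $H$-periodic and some $x_i\notin H$ remains, adding it raises $|S|$ by at least $|H|$, because $S$ is a union of $H$-cosets. So we choose the order to use elements outside the eventual stabilizer first, reasoning by the potential $|S|-|J|$.

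The cleaner route is to argue by contradiction via a Kneser-type step. Suppose for every nontrivial proper $H$ fewer than $2|H|-1$ of the $x_i$ lie in $H$. Then at any stuck stage with stabilizer $H$ we have $2n-1-k\ls 2|H|-2$, so $k\gs 2n+1-2|H|\gs 2n+1-n=n+1$ when $|H|\ls n/2$. This contradicts $k\ls n-2$.

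The main obstacle is exactly this counting: making sure the stuck stage really satisfies $k\ls n-2$ while $S\ne\Z_n$, and that $H$ is nontrivial. Both follow from the invariant $|\Sigma(J_k)|\gs k+1$ together with all $x_i\ne0$. I would write the argument as this contradiction, which in fact yields the stronger bound of more than $2|H|$ elements in $H$.
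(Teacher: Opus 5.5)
Your argument is correct and is essentially the paper's proof: build $J$ greedily, and when the process gets stuck, all unused $x_i$ lie in the nontrivial, proper stabilizer $H$ of the current subset-sum set. The only difference is the final count: the paper uses $k+1\ls|A_k|\ls n-|H|$ to get at least $n+|H|$ indices with $x_i\in H$, whereas you use $k\ls n-2$ together with $|H|\ls n/2$ to get at least $n+1\gs 2|H|+1$. For this same reason, your intermediate remark that $2n-1-k\gs n+1$ ``need not reach $2|H|-1$'' is false, so the ``refine''/quotient paragraph is unnecessary and should be deleted.
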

\Proof. Suppose that there is no nontrivial proper subgroup
$H$ of $\Z_n$ with $|\{1\ls i\ls 2n-1:\ x_i\in H\}|\gs 2|H|-1$.
We aim to find distinct numbers $j_1,\ldots,j_{n-1}$
among $1,\ldots,2n-1$ greedily so that the set $J=\{j_1,\ldots,j_{n-1}\}$ meets the purpose.

Let $j_1=1$. Then $A_1=\{0,x_{j_1}\}$ has cardinality two.
Suppose that we have chosen  $k<n-1$ distinct elements $j_1,\ldots,j_k$ from $\{1,\ldots,2n-1\}$
such that
$$A_k=\l\{\sum_{i\in I}x_{i}:\ I\se\{j_1,\ldots,j_k\}\r\}$$
has cardinality at least $k+1$.
We want to find $j_{k+1}\in\{1,\ldots,2n-1\}\sm \{j_1,\ldots,j_k\}$
such that
$$A_{k+1}=\l\{\sum_{i\in I}x_{i}:\ I\se\{j_1,\ldots,j_k,j_{k+1}\}\r\}$$
has cardinality at least $k+2$.
This is easy if $A_k=\Z_n$.

Now assume that
$A_k\not=\Z_n$ and set $J_k=\{j_1,\ldots,j_k\}$.
If $x_j+A_k\not=A_k$ for some $j\in\{1,\ldots,2n-1\}\sm J$, then we take such a $j$
as $j_{k+1}$ and note that $A_k\subset A_{k+1}$ and hence $|A_{k+1}|\gs |A_k|+1\gs k+2$.

Assume that $x_j+A_k=A_k$ for all $j\in\{1,\ldots,2n-1\}\sm J_k$.
Then $\{x_j:\ 1\ls j\ls 2n-1\ \&\ j\not\in J_k\}$ is a subset of the stabilizer
$H=\{h\in\Z_n:\ h+A_k=A_k\}$ of $A_k$. As $|J_k|<2n-1$ and $x_j\not=0$ for all $1\ls j\ls2n-1$,
we see that $H\not=\{0\}$. Note also that $H\se A_k\subset \Z_n$.
So $H$ is a nontrivial proper subgroup of $\Z_n$. As $A_k=A_k+H$
is a proper union of cosets of $H$, we have $k+1\ls|A_k|\ls n-|H|$.
Note that
\begin{align*}|\{1\ls j\ls 2n-1:\ j\not\in J_k\}|&=2n-1-k
\\&\gs 2n-1-(n-|H|-1)=n+|H|
\\&\gs 2|H|-1.
\end{align*}
This contradicts our supposition that such a subgroup $H$ does not exist.

In view of the above, we can find $J=\{j_1,\ldots,j_{n-1}\}\se\{1,\ldots,2n-1\}$ with $|J|=n-1$
such that $A_{n-1}=\{\sum_{i\in I}x_i:\ I\se J\}$ has cardinality $(n-1)+1$ and hence $A_{n-1}=\Z_n$.
This concludes our proof of Lemma \ref{Lem-H}. \qed

\medskip
\noindent {\it Proof of Theorem \ref{Th-2n-1}}. Among the $2n-2$ integers consisting of $n-1$ copies of $1$ and $n-1$ copies of $-1$, if the sum of some of them is divisible by $n$ then the sum is zero and hence divisible by $n^2$. So we have $w_1(n)>2n-2$.

Below we use induction on $n$ to show the last assertion in Theorem \ref{Th-2n-1}
which implies the inequality $w_1(n)\ls 2n-1$. The case $n=2$ is trivial.

Now let $n>2$ and assume that the desired result holds if $n$ becomes smaller.

Suppose that the desired result fails. We claim that for any nonempty $I\se\{1,\ldots,2n-1\}$  we have
$$S(I)\eq0\pmod n \Rightarrow S(I)\eq0\pmod{n^2},$$
where $S(I)=\sum_{i\in I}a_i$. In fact, when $n\mid S(I)$ for some nonempty $I\se\{1,\ldots,2n-1\}$,
as $D(\Z_n)=n$ we can partition $I$ as $I_1\cup \cdots\cup I_t$ with $0<|I_s|\ls n$ for all $s=1,\ldots,t$
such that $n\mid S(I_s)$ for all $s=1,\ldots,t$.
By the supposition, $S(I_s)\eq 0\pmod{n^2}$ for all $s=1,\ldots,t$, and hence
$S(I)=\sum_{s=1}^tS(I_s)\eq0\pmod{n^2}$. So the claim holds.

{\it Case}\ 1. There is a nontrivial proper subgroup $H$ of $\Z_n$ with $|\{1\ls i\ls 2n-1:\
a_i+n\Z\in H\}|\gs 2|H|-1$.

In this case, we have $H=d\Z$ with $d=n/|H|$. If $a_i+n\Z\in H$, then $d\mid a_i$.
As $2\ls |H|<n$, by the induction hypothesis, for some nonempty $I\se\{1\ls i\ls 2n-1:\ d\mid a_i\}$
with $|I|\ls |H|$ we have
$|H|\mid \sum_{i\in I}\f{a_i}d$ and $|H|^2\nmid \sum_{i\in I}\f{a_i}d$.
Thus $n=d|H|$ divides $S(I)$ but $n^2=d^2|H|^2$ does not divide $S(I)$.
This contradicts our supposition.

{\it Case} 2. There is no nontrivial proper subgroup $H$ of $\Z_n$ with $|\{1\ls i\ls 2n-1:\
a_i+n\Z\in H\}|\gs 2|H|-1$.

In this case, by Lemma \ref{Lem-H}, for some $J\se\{1,\ldots,2n-1\}$ with $|J|=n-1$, we have
$\{S(I)+n\Z:\ I\se\ J\}=\Z_n$.
For any nonempty $I_0\se\{1,\ldots,2n-1\}\sm J$, there is a $J_0\se J$ such that
$S(I_0)\eq-S(J_0)\pmod n$ and hence
$$S(I_0)\eq-S(J_0)\pmod {n^2},$$
which shows that $S(I_0)$ modulo $n^2$ only depends on $S(I_0)$
modulo $n$.

Write $\bar J=\{1,\ldots,2n-1\}\sm J=\{i_1,\ldots,i_n\}$. By the last paragraph, for the set
$${\mathcal S}=\{S(I)+n^2\Z:\ \emptyset \not=I\se \bar J\},$$
we have
\begin{equation}\label{S(I)}
|{\mathcal S}|=|\{S(I)+n\Z:\ \emptyset \not=I\se\bar J\}|\ls n.
\end{equation}
For $k=1,\ldots,n$, define
$${\mathcal S}_k=\{S(I)+n^2\Z:\ I\se\{i_1,\ldots,i_k\}\}.$$
Then ${\mathcal S}_1=\{n^2\Z,a_{i_1}+n^2\Z\}$ has cardinality two.
Note that ${\mathcal S_{k}}={\mathcal S}_{k-1}+\{n^2\Z,a_{i_k}+n^2\Z\}$
for all $k=2,\ldots,n$.

 If ${\mathcal S}_k={\mathcal S}_{k-1}$ for some $2\ls k\ls n$,
then
$$ H=\{h+n^2\Z:\ {\mathcal S}+(h+n^2\Z)={\mathcal S}\}$$
is a subgroup of $\Z_{n^2}$ containing $a_{i_k}+n^2\Z$.
As $n\nmid a_{i_k}$, we have $|H|>1$. Let $p$ be the smallest prime divisor of $|H|$.
Then $p\mid |H|$ and hence $p\mid n^2$. Thus $p\mid n$. The only subgroup of $H$
of order $p$ is $\{(n^2/p)q+n^2\Z:\ q=1,\ldots,p\}$. Thus $n^2/p$ divides $a_{i_k}$
which contradicts $n\nmid a_{i_k}$.

By the above, $|{\mathcal S}_k|\gs |\mathcal S_{k-1}|+1$ for all $k=2,\ldots,n$.
Therefore
$$|{\mathcal S}\cup\{n^2\Z\}|=|{\mathcal S}_n|\gs|S_1|+n-1\gs n+1.$$
Combining this with \eqref{S(I)}, we obtain that $n^2\Z\not\in {\mathcal S}$.

As $D(\Z_n)=n$, for some nonempty $I\se \bar J=\{i_1,\ldots,i_{n}\}$, we have $S(I)=\sum_{i\in I}a_i\eq0\pmod n$
and thus $S(I)\eq0\pmod{n^2}$, which contradicts with $n^2\Z\not\in {\mathcal S}$.

In view of the above, we have completed our proof of Theorem \ref{Th-2n-1}. \qed

The following lemma is motivated by Chowla's extension (cf. \cite[pp.\,43-44]{N96}) of the Cauchy-Davenport theorem
or the original proof of the EGZ theorem (cf. \cite{EGZ}).

\begin{lemma} \label{Lem-n} Let $n>1$ be an integer, and let $a_1,\ldots,a_{m}$ (with $m<n$)
be integers relatively prime to $n$. Then we have
$$\l|\l\{\sum_{i\in I}a_i+n\Z:\ I\se\{1,\ldots,m\}\r\}\r|\gs m+1.$$
\end{lemma}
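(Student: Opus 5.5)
We argue by induction on $m$, following the classical step in Chowla's theorem. For $k=0,1,\ldots,m$ let
$$A_k=\l\{\sum_{i\in I}a_i+n\Z:\ I\se\{1,\ldots,k\}\r\},$$
so that $A_0=\{n\Z\}$ and $A_k=A_{k-1}\cup(A_{k-1}+(a_k+n\Z))$. The claim is that $|A_k|\gs k+1$ for every $k\ls m$, and it suffices to show that $|A_k|\gs|A_{k-1}|+1$ whenever $|A_{k-1}|<n$. Since $k-1<m<n$, the inductive hypothesis only gives $|A_{k-1}|\gs k$, which may be less than $n$. So we first dispose of the easy case: if $A_{k-1}=\Z_n$, then $|A_k|=n\gs m+1\gs k+1$ and there is nothing to prove.

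Now suppose $A_{k-1}\ne\Z_n$ and, for contradiction, that $A_k=A_{k-1}$. Then $A_{k-1}+(a_k+n\Z)\se A_{k-1}$. Because $A_{k-1}$ is finite and translation is a bijection, this inclusion forces $A_{k-1}+(a_k+n\Z)=A_{k-1}$. Iterating gives $A_{k-1}+j a_k+n\Z=A_{k-1}$ for all $j\in\Z$, so $A_{k-1}$ is a union of cosets of the subgroup generated by $a_k+n\Z$.

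The key point is that $\gcd(a_k,n)=1$, so $a_k+n\Z$ generates all of $\Z_n$. Since $A_{k-1}$ is nonempty (it contains $n\Z$), it follows that $A_{k-1}=\Z_n$, contradicting our assumption. Hence $|A_k|\gs|A_{k-1}|+1$. Combining the two cases by induction yields $|A_m|\gs\min\{n,m+1\}=m+1$, because $m<n$. This is exactly the statement of Lemma \ref{Lem-n}.

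There is no serious obstacle. The only point needing care is that coprimality is used for each individual $a_k$: it ensures the stabilizer of a proper nonempty subset can never contain $a_k+n\Z$. This is in contrast with Lemma \ref{Lem-H}, where nontrivial proper subgroups genuinely intervene.
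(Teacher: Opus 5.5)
Your proof is correct and is essentially the paper's argument: the sets $A_k=A_{k-1}\cup(A_{k-1}+a_k+n\Z)$ either grow by at least one element, or $A_{k-1}$ is invariant under translation by the generator $a_k+n\Z$, which forces $A_{k-1}=\Z_n$ and hence $|A_k|=n\gs m+1$. Starting the induction at $A_0$ instead of at $S_1=\{n\Z,a_1+n\Z\}$ is only a cosmetic difference.
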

\Proof. For $k=1,\ldots,m$, we define
$$S_k=\l\{\sum_{i\in I}a_i+n\Z:\ I\se\{1,\ldots,k\}\r\}.$$

Clearly, $S_1=\{n\Z,a_1+n\Z\}$ has cardinality two.

Now let $1<k\ls m$ and assume that $|S_{k-1}|\gs k$. Clearly $S_k=S_{k-1}+\{n\Z,a_k+n\Z\}$.
If $S_{k-1}\not=S_k$, then $|S_k|=|S_{k-1}|+1\gs k+1$.
If $S_{k-1}=S_k$, then
$$S_{k-1}+(a_k+n\Z)=S_{k-1}, \ S_{k-1}+2a_k+n\Z=S_{k-1},\ \ldots,\ S_{k-1}+na_k+n\Z=S_{k-1},$$
and hence $|S_k|=|S_{k-1}|\gs n\gs k+1$ since $\{a_k,2a_k,\ldots,na_k\}$
is a complete system of residues modulo $n$.

By the above, we have proved that $|S_k|\gs k+1$ for all $k=1,\ldots,m$ by induction.
So $|S_{m}|\gs m+1$ as desired. \qed

\medskip
\noindent{\it Proof of Theorem \ref{Th-2n-2}}.
We first prove the `if' direction. Suppose that \eqref{b} holds for some $b\in\Z$
with $\gcd(b,n)=1$. If $lb+m(-b)\eq0\pmod n$ with $l,m\in\{0,\ldots,n-1\}$ and $l+m>0$, then
$l=m$ and $lb+m(-b)=0$, so there is no nonempty $I\se\{1,\ldots,2n-1\}$ with $\sum_{\in I}a_i$
divisible by $n$ but not divisible by $n^2$.

 Below we prove the `only if' direction. Assume that there is no nonempty $I\se\{1,\ldots,2n-1\}$ such that $\sum_{i\in I}a_i$
is divisible by $n$ but not divisible by $n^2$.
Set $S(I)=\sum_{i\in I}a_i$ for all $I\se\{1,\ldots,2n-2\}$.
By Lemma \ref{Lem-n},  the set $\{S(J)+n\Z:\ J\se\{n,\ldots,2n-2\}\}$
has cardinality at least $(n-1)+1$ and hence this set coincides with $\Z_n$.

Let $\emptyset\not=I\se\{1,\ldots,n-1\}$. Then, for some $J\se\{n,\ldots,2n-2\}$ we have
$S(J)\eq-S(I)\pmod n$ and hence $S(I\cup J)\eq0\pmod n$. By the assumption, we must have
$S(I\cup J)\eq0\pmod{n^2}$ and hence $S(I)\eq-S(J)\pmod{n^2}$.
If $\emptyset\not=I'\se\{1,\ldots,n-1\}$ and $S(I')\eq S(I)\pmod n$, then we also have
$S(I')\eq- S(J)\pmod{n^2}$ and hence $S(I')\eq S(I)\pmod {n^2}$.

By the last paragraph, for the set
$${\mathcal S}=\{S(I)+n^2\Z:\ \emptyset \not=I\se\{1,\ldots,n-1\}\},$$
we have
\begin{equation}\label{<=}
|{\mathcal S}|=|\{S(I)+n\Z:\ \emptyset \not=I\se\{1,\ldots,n-1\}\}|\ls n.
\end{equation}
As $a_1,\ldots,a_n$ are relatively prime to $n^2$, by Lemma \ref{Lem-n} we have
\begin{equation}\label{>=} |{\mathcal S}\cup\{n^2\Z\}|=|\{S(I)+n^2\Z:\  I\se\{1,\ldots,n-1\}\}|\gs n.
\end{equation}
Combining \eqref{<=} with \eqref{>=}, we get that
$$|{\mathcal S}|\not=n\Rightarrow |{\mathcal S}|=n-1\ \t{and}\ n^2\Z\not\in {\mathcal S}.$$
If $|{\mathcal S}|=n$, then
$$\{S(I)+n\Z:\ \emptyset \not=I\se\{1,\ldots,n-1\}\}=\Z_n,$$
hence for certain nonempty $I\se\{1,\ldots,n-1\}$ we have $S(I)\eq0\pmod n$ and thus $S(I)\in n^2\Z$.
So, we always have $|{\mathcal S}\cup\{n^2\Z\}|=n$.

For $k=1,\ldots,n-1$ define
$${\mathcal S}_k=\{S(I)+n^2\Z:\ I\se\{1,\ldots,k\}\}.$$
Then ${\mathcal S}_1=\{n^2\Z,a_1+n^2\Z\}$ has cardinality two, and ${\mathcal S}_n={\mathcal S}\cup\{n^2\Z\}$ has cardinality $n$.
Note that ${\mathcal S_{k}}={\mathcal S}_{k-1}+\{n^2\Z,a_k+n^2\Z\}$
for all $k=2,\ldots,n-1$. If ${\mathcal S}_k={\mathcal S}_{k-1}$ for some $2\ls k\ls n-1$, then
\begin{align*}&\ {\mathcal S}_{k-1}+(a_k+n^2\Z)={\mathcal S}_{k-1},
\\&\
{\mathcal S}_{k-1}+(2a_k+n^2\Z)={\mathcal S}_{k-1},
\\& \ldots,\
{\mathcal S}_{k-1}+(n^2a_k+n^2\Z)={\mathcal S}_{k-1},
\end{align*}
and hence $|{\mathcal S}_{k-1}|\gs n^2>n+1$ which contradicts
the fact that $|{\mathcal S}_{k-1}|\ls |{\mathcal S}_{n-1}|=n$.
Thus ${\mathcal S}_{k-1}\subset {\mathcal S}_k$ for all $k=2,\ldots,n-1$.
Since
$$\sum_{1<k<n} |{\mathcal S}_k\sm {\mathcal S}_{k-1}|=
|{\mathcal S}_n|-|{\mathcal S}_1|=n+1-2=n-1,$$
we have $|{\mathcal S}_k\sm {\mathcal S}_{k-1}|=1$ for all $1<k<n$.
Therefore $|{\mathcal S}_k|=k+1$ for each $k=1,\ldots,n-1$.

We claim that for each $k=1,\ldots,n-1$, we have
$$a_k+n^2\Z\in\{a_1+n^2\Z,-a_1+n^2\Z\}$$
and
$$ {\mathcal S}_k=\{ja_1+n^2\Z:\ j=j_k,\ldots,j_k+k\}\ \ \t{for some}\ j_k\in\Z.$$
This holds trivially for $k=1$.

Now, let $1<k<n$ and suppose that
$$ {\mathcal S}_{k-1}=\{ja_1+n^2\Z:\ j=j_{k-1},\ldots,j_{k-1}+k-1\}$$
with $j_{k-1}\in\Z.$ As $\gcd(a_1a_k,n)=1$, we have $a_1q\eq a_k\pmod{n^2}$ for some $1\ls q<n^2$
with $\gcd(q,n)=1$. Let $J=\{j_{k-1},\ldots,j_{k-1}+k-1\}$. Then
$${\mathcal S}_k=\{ja_1+n^2\Z:\ j\in J\cup (q+J)\}.$$
Note that $|{\mathcal S}_k|=k+1=|J|+1$.
If $2\ls q\ls n^2-k$, then
$$\max J<q+\max J-1<q+\max J\ls n^2-k+\max J=\min J+n^2-1$$
and hence $|{\mathcal S}_k|\gs|J|+2$ which leads to a contradiction.

Suppose that  $n^2-k<q\ls n^2-2$. Then  $q>n^2-n\gs n>k-1$, $q'=n^2-q\in\{2,\ldots,k-1\}$, and
$$\{q+j+n^2\Z:\ j\in J\}=\{-q'+j+n^2\Z:\ j\in J\}.$$
Thus
\begin{align*}\min J-q'&\ls \min J+1-q'<\min J=j_{k-1}
\\&<\max J=\min J+k-1<\min J-q'+n^2
\end{align*}
and hence $|{\mathcal S}_k|\gs|J|+2$ which leads to a contradiction.

In view of the last two paragraphs, we must have $q\in\{1,n^2-1\}$, and hence $a_k$ is congruent to $a_1$ or $-a_1$ modulo $n^2$. If $a_k\eq a_1\pmod {n^2}$, then
$${\mathcal S}_k=\{j+n^2\Z:\ j=j_{k-1},\ldots,j_{k-1}+k\}.$$
If $a_k\eq -a_1\pmod {n^2}$, then
$${\mathcal S}_k=\{j+n^2\Z:\ j=j_{k-1}-1,j_{k-1},\ldots,j_{k-1}+k-1\}.$$

As we have proved the claim by induction, we have
$$\{a_k+n^2\Z:\ k=1,\ldots,n-1\}\se\{a_1+n^2\Z,-a_1+n^2\Z\}.$$
Similarly, for any $I\se\{1,\ldots,2n-2\}$ with $|I|=n-2$, we also have
$$\{a_k+n^2\Z:\ k\in\{1\}\cup I\}\se\{a_1+n^2\Z,-a_1+n^2\Z\}.$$
Therefore,
$$\{a_k+n^2\Z:\ k=1,\ldots,2n-2\}\se\{a_1+n^2\Z,-a_1+n^2\Z\}.$$
For
$$l=|\{1\ls k\ls 2n-2:\ a_i\eq a_1\pmod {n^2}$$
and $$m=|\{1\ls k\ls 2n-2:\ a_i\eq a_1\pmod {n^2},$$
we have $l+m=2(n-1)$. If $l\gs n$ then for some $I\se\{1,\ldots,2n-2\}$ with $|I|=n$
we have $a_i\eq a_1\pmod{n^2}$ for all $i\in I$, and hence $\sum_{\in I}a_i\eq na_1\not\eq0\pmod{n^2}$
which contradicts the assumption. Similarly, $m\gs n$ is also impossible. As $l+m=2(n-1)$, we must have $l=m=n-1$. So \eqref{b} holds for $b=a_1$.

In view of the above, we have completed our proof of Theorem \ref{Th-2n-2}. \qed

\section{Proofs of Theorems \ref{Th-bound}-\ref{Th-value}}
\setcounter{lemma}{0}
\setcounter{theorem}{0}
\setcounter{corollary}{0}
\setcounter{remark}{0}
\setcounter{equation}{0}

The following lemma is motivated by the proof of \cite[Lemma 3.4]{G25}.

\begin{lemma} \label{Lem2.1} Let $n>1$ and $r>0$ be integers. Let
${\bf a}_1,\ldots,{\bf a}_{n+1}$ be vectors in $\Z^r$ with ${\bf a}_1\eq\cdots\eq{\bf a}_{n+1}\not\eq {\bf 0}\pmod n$, where ${\bf 0}=(0,\ldots,0)$ is the zero vector in $\Z^r$.  Then, for some $I\se\{1,\ldots,n+1\}$ with $|I|=n$, we have
$\sum_{i\in I}{\bf a}_i\eq{\bf 0}\pmod n$ but $\sum_{i\in I}{\bf a}_i\not\eq{\bf 0}\pmod {n^2}$.
\end{lemma}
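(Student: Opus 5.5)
The plan is to sum all $n+1$ vectors and compare the $n+1$ sums obtained by leaving out one vector at a time. Put ${\bf s}=\sum_{i=1}^{n+1}{\bf a}_i$ and, for $j=1,\ldots,n+1$, let $I_j=\{1,\ldots,n+1\}\sm\{j\}$, so that
$$\sum_{i\in I_j}{\bf a}_i={\bf s}-{\bf a}_j.$$
Write ${\bf a}_i={\bf c}+n{\bf b}_i$ with ${\bf c}\not\eq{\bf 0}\pmod n$, which is possible because all the ${\bf a}_i$ are congruent modulo $n$. Then
$$\sum_{i\in I_j}{\bf a}_i=n{\bf c}+n\sum_{i\in I_j}{\bf b}_i\eq{\bf 0}\pmod n$$
for every $j$. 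So each of the $n+1$ index sets $I_j$ with $|I_j|=n$ satisfies the first requirement, and it remains to show that at least one of these sums is not $\eq{\bf 0}\pmod{n^2}$.

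Suppose, for a contradiction, that ${\bf s}-{\bf a}_j\eq{\bf 0}\pmod{n^2}$ for all $j=1,\ldots,n+1$. Subtracting two of these congruences gives ${\bf a}_j\eq{\bf a}_k\pmod{n^2}$ for all $j,k$, so every ${\bf a}_j$ is congruent to ${\bf a}_1$ modulo $n^2$. Then
$${\bf s}-{\bf a}_1\eq n{\bf a}_1\pmod{n^2}.$$
Because ${\bf a}_1\not\eq{\bf 0}\pmod n$, some coordinate $a_{1l}$ satisfies $n\nmid a_{1l}$, and hence $n^2\nmid na_{1l}$. This means $n{\bf a}_1\not\eq{\bf 0}\pmod{n^2}$, which contradicts ${\bf s}-{\bf a}_1\eq{\bf 0}\pmod{n^2}$.

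The argument is short, and there is no real obstacle. The only step needing care is the last one: the reduction $n{\bf v}\eq{\bf 0}\pmod{n^2}$ if and only if ${\bf v}\eq{\bf 0}\pmod n$, applied coordinatewise to ${\bf v}={\bf a}_1$.
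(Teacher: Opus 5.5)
Your proof is correct and is essentially the paper's argument: the paper also compares two $n$-element sums that differ in a single index (your $I_t$ and $I_s$ are exactly its $J\cup\{s\}$ and $J\cup\{t\}$) to get ${\bf a}_s\eq{\bf a}_t\pmod{n^2}$. It then reaches the same contradiction, $\sum_{i=1}^n{\bf a}_i\eq n{\bf a}_1\not\eq{\bf 0}\pmod{n^2}$.
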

\Proof. Assume that the desired result fails. We want to deduce a contradiction.

Let $1\ls s<t\ls n+1$. Then the set $J=\{1,\ldots,n+1\}\sm\{s,t\}$ has cardinality $n-1$.
Since
$$\sum_{i\in J\cup\{s\}}{\bf a}_i\eq nr\eq\sum_{i\in J\cup\{t\}}{\bf a}_i\pmod n,$$
by the assumption we must have
$$\sum_{i\in J\cup\{s\}}{\bf a}_i\eq0\eq\sum_{i\in J\cup\{t\}}{\bf a}_i\pmod{n^2}$$
and hence ${\bf a}_s\eq {\bf a}_t\pmod{n^2}$.

By the last paragraph,  we have
$$\sum_{i=1}^n {\bf a}_i\eq \sum_{i=1}^n {\bf a}_1=n{\bf a}_1\pmod {n^2},$$
which yields a contradiction since ${\bf a}_1\not\eq0\pmod n$.
This concludes our proof. \qed

\medskip
\noindent{\it Proof of Theorem \ref{Th-bound}}. (i) There are totally $n^r-1$
nonzero vectors in $\Z_n^r$. By the Pigeonhole Principle, among any $k\gs n(n^r-1)+1$
vectors in $\Z^r$ not congruent to the zero vector ${\bf 0}=(0,\ldots,0)\in\Z^r$,
there are $n+1$ of them lying in the same residue class modulo $n$, and hence by Lemma
\ref{Lem2.1} we can select $n$ of them such that their sum is congruent to ${\bf 0}$
modulo $n$ but not congruent to ${\bf 0}$ modulo $n^2$. Thus $s_r(n)\ls n(n^r-1)+1$.
Similarly, we have $t_r(n)\ls n\varphi(n)^r+1$ since there are totally $\varphi(n)^r$ vectors in $(\Z_n^\times)^r$, where
$$\Z_n^\times=\{a+n\Z:\ 1\ls a\ls n\ \t{and}\ \gcd(a,n)=1\}).$$

(ii) Assume $n>2$. Set
$$a_1=\cdots =a_{n-1}=1,\ a_n=\cdots =a_{2n-2}=-1,\ a_{2n-1}=c,\ a_{2n}=-c,$$
where
$$c=\begin{cases}2&\t{if}\ 2\mid n,\\n-1&\t{if}\ 2\nmid n.
\end{cases}
$$

Let $I\se\{1,\ldots,2n\}$ with $|I|\ls n$,  and suppose that $n\mid \sum_{i\in I}a_i$
but $n^2\nmid \sum_{i\in I}a_i$.
As $|\sum_{i\in I}a_i|\ls (n-1)\times 1+c<2n$, we must have $\sum_{i\in I}a_i\in\{\pm n\}$.

If $\sum_{i\in I}a_i=n$ and $2\nmid n$, then
for some $0\ls s\ls n-2$ the multiset $\{a_i:\ i\in I\}$ consists of $c=n-1$, together with $s+1$ copies of $1$ and
$s$ copies of $-1$, hence $2\mid |I|$ and $|I|\not=n$.
Similarly, if $\sum_{i\in I}a_i=-n$ and $2\nmid n$ then we also have $2\mid |I|$ and $|I|\not=n$.

If $\sum_{i\in I}a_i=n$ and $2\mid n$, then
 the multiset $\{a_i:\ i\in I\}$ consists of $c=2$ and $n-2$ copies of $1$, hence $|I|=n-1\not=n$.
Similarly, if $\sum_{i\in I}a_i=-n$ and $2\mid n$ then we also have $|I|=n-1\not=n$.

By the above, $|I|$ is even if $n$ is odd, and $|I|=n-1$ if $n$ is even.

Now let's consider the following $2nr$ vectors in $\Z^r$:
\begin{align*}&(a_i,0,\ldots,0)\ (1\ls i\ls 2n),
\\& (0,a_i,0,\ldots,0)\ (1\ls i\ls 2n),
\\& \ldots,\ (0,\ldots,0,a_i)\ (1\ls i\ls 2n).
\end{align*}
By the above analysis, no $n$ of them sum to a vector in $\Z^r$
which is congruent to ${\bf 0}=(0,\ldots,0)\in\Z^r$ modulo $n$
but not so modulo $n^2$. Therefore $s_r(n)\gs 2nr+1$.

Now assume that $n$ is odd.
As $\pm1$ and $c=n-1$ are relatively prime to $n$,
by the above we also have $t_1(n)\gs 2n+1$.
Among the $4n$ ordered pairs
consisting of $(2,2),(2,-2),(-2,2),(-2,-2)$ and $n-1$ copies of
$(1,1),\ (1,-1),\ (-1,1),\ (-1,-1),$
obviously no $n$ of them sum to an ordered pair congruent to $(0,0)$ modulo $n$ but not so modulo $n^2$. Thus $t_2(n)\gs 4n+1$.

(iii) Now we turn to prove part (iii) of Theorem \ref{Th-bound}.
Consider  $k\gs s(\Z_n^{2r})$ integer vectors
$${\bf a}_1=(a_{11},\ldots,a_{1r}),\ \ldots,\ {\bf a}_k=(a_{k1},\ldots,a_{kr})$$
not congruent to ${\bf 0}=(0,\ldots,0)$ modulo $n$.
Write $a_{ij}=nq_{ij}+b_{ij}$ with $q_{ij},b_{ij}\in\Z$ and $0\ls b_{ij}\ls n-1$.
As $k\gs s(\Z_n^{2r})$, for the $k$ vectors
$$(q_{i1},\ldots,q_{ir},b_{i1},\ldots,b_{ir})\ (i=1,\ldots,k)$$
there is an $I\se\{1,\ldots,k\}$ with $|I|=n$
such that
$$\sum_{i\in I}q_{ij}\eq0\pmod n\ \t{and}\ \sum_{i\in I} b_{ij}\eq0\pmod n$$
for all $j=1,\ldots,r$. Note that
$$\sum_{i\in I}a_{ij}=n\sum_{i\in I}q_{ij}+\sum_{i\in I} b_{ij}\eq \sum_{i\in I}b_{ij}\pmod{n^2}.$$
If $\sum_{i\in I}{\bf a}_{i}\eq{\bf 0}\pmod {n^2}$, then, for each $j=1,\ldots,r$, we have
$$\sum_{i\in I}b_{ij}\eq0\pmod{n^2}$$
and hence $b_{ij}=0$ for all $i\in I$ since $0\ls \sum_{i\in I}b_{ij}\ls n(n-1)<n^2$.
As ${\bf a}_i\not\eq{\bf 0}\pmod n$ for all $i\in I$, we see that
$\sum_{i\in I}{\bf a}_{i}\not\eq{\bf 0}\pmod {n^2}$ although $\sum_{i\in I}{\bf a}_{i}\eq{\bf 0}\pmod {n}$. Therefore $s_r(n)\ls s(\Z_n^{2r})$.

Now assume that $n=2m$ for some $m\in\Z^+$. We
want to show that  $t_r(n)\ls n+s(\Z_m^r)$
Given $k\gs 2m+s(\Z_m^r)$ integer vectors ${\bf a}_i=(a_{i1},\ldots,a_{ir})$
 $(i=1,\ldots,k)$ with $a_{ij}$ relatively prime to $n$ for all $1\ls i\ls k$ and $1\ls j\ls r$,
 as $n$ is even we can write $a_{ij}=2q_{ij}+1$ with $q_{ij}\in\Z$. Since $k\gs s(\Z_m^r)$,
there is an $I_1\se\{1,\ldots,k\}$ with $|I_1|=m$ such that
$\sum_{i\in I_1}{\bf q}_i=m{\bf v}_1$
for some ${\bf v}_1\in\Z^r$, where ${\bf q}_i=(q_{i1},\ldots,q_{ir})$.
Similarly, for some $I_2\se\{1,\ldots,k\}\sm I$ with $|I_2|=m$, we have
$\sum_{i\in i_2}{\bf q}_i=m{\bf v}_2$
for some ${\bf v}_2\in\Z^r$.
Also, for some $I_3\se\{1,\ldots,k\}\sm(I_1\cup I_2)$ with $|I_3|=m$, we have
$\sum_{i\in i_3}{\bf q}_i=m{\bf v}_3$
for some ${\bf v}_3\in\Z^r$. Thus,
$$\sum_{i\in I_j}{\bf a_i}=\sum_{i\in I_j}(2{\bf q}_i+{\bf 1})=m{\bf u}_j$$
for all $j=1,2,3$, where ${\bf 1}=(1,\ldots,1)\in\Z^r$ and ${\bf u}_j=2{\bf v}_j+{\bf 1}$.
Note that
$$|I_1\cup I_2|=|I_1\cup I_3|=|I_2\cup I_3|=m+m=n$$
and
$$m{\bf u}_i+m{\bf u}_j\eq{\bf 0}\pmod {2m}\ \ \t{for all}\ 1\ls i< j\ls 3,$$
where ${\bf 0}$ is the zero vector in $\Z^r$.
By Lemma \ref{Lem2.1}, there are $1\ls i<j\ls 3$ such that
${\bf u}_i+{\bf u}_j\not\eq{\bf 0}\pmod 4$ and hence
${\bf u}_i+{\bf u}_j\not\eq{\bf 0}\pmod{4m}$.
Observe that
$$\sum_{h\in I_i\cup I_j}a_h\eq m({\bf u}_i+{\bf u}_j)\eq{\bf 0}\pmod{n}$$
but
$$\sum_{h\in I_i\cup I_j}a_h\eq m({\bf u}_i+{\bf u}_j)\not\eq{\bf 0}\pmod{n^2}.$$
So we do have the inequality $t_r(2)\ls n+s(\Z_{n/2}^r)$.

In view of the above, we have completed our proof of Theorem \ref{Th-bound}. \qed

\medskip
\noindent{\it Proof of Theorem \ref{Th-value}}. (i) In view of \eqref{st<=} with $n=2$, we have
$$s_r(2)\ls 2^{r+1}-2+1=2^{r+1}-1\ \ \t{and}\ \ t_r(2)\ls 2\varphi(2)^r+1=3.$$

Note that $t_r(2)>2$ since $(1,\ldots,1)+(-1,\ldots,-1)=(0,\ldots,0)$. So we have $t_r(2)=3$.

If ${\bf v}_1,\ldots,{\bf v}_{2^r-1}$ are all the nonzero vectors in $\{0,1\}^r$, then among the $2(2^r-1)=2^{r+1}-2$ vectors
$${\bf v}_1,\ -{\bf v}_1,\ \ldots,\ {\bf v}_{2^r-1},\ -{\bf v}_{2^r-1}$$
no two of them sum to a vector congruent to the zero vector ${\bf 0}\in\Z^r$ modulo $2$ but not congruent to ${\bf 0}$ modulo $4$. So $s_r(2)>2^{r+1}-2$ and hence $s_r(2)=2^{r+1}-1$.

 In view of \eqref{st<=} with $n=3$, we have
$$ t_r(3)\ls 3\varphi(3)^r+1=3\times2^r+1.$$
Consider the $3\times 2^r$ vectors consisting of
$-2{\bf v}$ and two copies of ${\bf v}$ for all ${\bf v}\in\{1,-1\}^r$.
If three of them sum to a vector congruent to the zero vector ${\bf 0}\in\Z^r$ modulo $3$, then
the three vectors are ${\bf v},{\bf v},-2{\bf v}$ for some ${\bf v}\in\{1,-1\}^r$, and their sum
is the zero vector. So $t_r(3)>3\times2^r$ and hence $t_r(3)=3\times2^r+1$.

Now assume $r>1$. Note that $s_r(3)\gs t_r(3)=3\times 2^r+1$. 
Below we show that $s_r(3)\ls 2\times 3^r-1$. 

 Let ${\bf a}_1,\ldots,{\bf a}_l$ be vectors in $\Z^r$
not congruent to the zero vector ${\bf 0}\in\Z^r$ modulo $3$, with
$l\gs 2\times 3^r-1$. 
Suppose that there is no $I\se\{1,\ldots,l\}$ with $|I|=3$ such that $\sum_{i\in I}{\bf a}_i
\eq{\bf 0}\pmod 3$ but $\sum_{i\in i}{\bf a}_i\not\eq{\bf 0}\pmod 9$. 
We want to deduce a contradiction. 

For each nonzero vector ${\bf x}\in V=\{0,1,-1\}^r$, set
$$m({\bf x})=|\{1\ls i\ls k:\ {\bf a}_i\eq {\bf x}\pmod3\}|.$$
Let ${\bf u},{\bf v},{\bf w}$ be three distinct nonzero elements of $V$
with ${\bf u}+{\bf v}+{\bf w}\eq{\bf 0}\pmod3$. We claim that $m({\bf u})+m({\bf v})+m({\bf w})\ls6$.

By Lemma \ref{Lem2.1} and the supposition, we have
$$\max\{m({\bf u}),m({\bf v}),m({\bf w})\}\ls 3.$$
If one of $m({\bf u}),m({\bf v}),m({\bf w})$ is zero, then
$m({\bf u})+m({\bf v})+m({\bf w})\ls0+3+3=6$. 

Now assume that $m({\bf u}),m({\bf v}),m({\bf w})\gs1$. 
Then there are $1\ls i<j<k\ls l$ such that
${\bf a}_i,{\bf a}_j,{\bf a}_k$ are congruent to the vectors ${\bf u},{\bf v},{\bf w}$, respectively. 
Then ${\bf a}_i+{\bf a}_j+{\bf a}_k\eq{\bf 0}\pmod 3$ and hence
${\bf a}_i+{\bf a}_j+{\bf a}_k\eq{\bf 0}\pmod 9$ by the supposition.
If one of $m({\bf u}),m({\bf v}),m({\bf w})$ is $3$, say, $m({\bf u})=3$, then there are distinct
$i_1,i_2\in\{1,\ldots,l\}\sm\{i,j,k\}$ such that 
$${\bf a}_{i_1}\eq {\bf a}_{i_2}\eq {\bf a}_i\eq {\bf u}\pmod3$$
and hence $${\bf a}_{i_1}\eq{\bf a}_{i_2}\eq {\bf a}_i\pmod 9$$
since ${\bf a}_{i_1}+{\bf a}_j+{\bf a}_k\eq{\bf 0}\pmod 9$ and ${\bf a}_{i_2}+{\bf a}_j+{\bf a}_k\eq{\bf 0}\pmod 9$, therefore ${\bf a}_{i_1}+{\bf a}_{i_2}+{\bf a}_i\eq3{\bf a}_i\not\eq{\bf 0}\pmod 9$ which leads to a contradiction.
Thus $m({\bf u}),m({\bf v}),m({\bf w})\ls2$ and hence $m({\bf u})+m({\bf v})+m({\bf w})\ls6$.
This proves the claim.

Let $S$ be the set of those $\{{\bf u},{\bf v},{\bf w}\}$ with ${\bf u},{\bf v},{\bf w}$
distinct elements of $V\sm\{{\bf 0}\}$ satisfying ${\bf u}+{\bf v}+{\bf w}\eq{\bf 0}\pmod3$.
Clearly, ${\bf v}\not\in\{{\bf 0},{\bf u},-{\bf u}\}$.
By the claim, we have
$$\sum_{\{{\bf u},{\bf v},{\bf w}\}\in S}(m({\bf u})+m({\bf v})+m({\bf w}))\ls 6|S|.$$
Observe that
$$(3!)|S|=\sum_{{\bf u}\in V\sm\{{\bf 0}\}}|\{{\bf v}\in V:\ {\bf v}\not=0,\pm{\bf u}\}|
=\sum_{{\bf u}\in V\sm\{{\bf 0}\}}(3^r-3)=(3^r-1)(3^r-3).$$
Also,
\begin{align*}&\ \sum_{\{{\bf u},{\bf v},{\bf w}\}\in S}(m({\bf u})+m({\bf v})+m({\bf w}))
\\=&\ \sum_{{\bf x}\in V\sm\{{\bf 0}\}}m({\bf x})|\{\{{\bf y},{\bf z}\}:\ {\bf y},{\bf z}\in V\sm\{0\}
\ \t{and}\ {\bf z}\eq -{\bf x}-{\bf y}\pmod3\}|
\\=&\ \sum_{{\bf x}\in V\sm\{{\bf 0}\}}m({\bf x})\f{|\{{\bf y}\in V:\ {\bf y}\not={\bf 0},\pm{\bf x}\}|}2=\f{3^r-3}2\sum_{{\bf x}\in V\sm\{{\bf 0}\}}m({\bf x})=\f{3^r-3}2l.
\end{align*}
Therefore,
$$\f{3^r-3}2l\ls 6|S|=(3^r-1)(3^r-3)$$
and hence $l\ls 2(3^r-1)$, which contradicts $l\gs 2\times3^r-1$.

 (ii) Recall the condition that $n$ is even.
 Among the $2(n-1)$ numbers consisting of $n-1$ copies of $1$
and $n-1$ copies of $-1$, no $n$ of them sum to an integer divisible by $n$ but not divisible by $n^2$.
Therefore $t_1(n)\gs 2n-1$. On the other hand,
$$t_1(n)\ls n+s(\Z_{n/2})=n+2\times\f n2-1=2n-1$$ by Theorem \ref{Th-bound}(iii).
Therefore $t_1(n)=2n-1$.

Now we turn to determine the value of $t_2(n)$. Let us consider the $3n-4$ ordered pairs
consisting of $n-1$ copies of $(1,-1)$ and $(-1,1)$, and $n/2-1$ copies of $(1,1)$ and $(-1,-1)$.
It is easy to see that no $n$ of them sum to an ordered pair congruent to $(0,0)$ modulo $n$ but not so modulo $n^2$ (since $s\times1+(n-s)(-1)=2s-n\not\eq0\pmod n$ for all $0<s<n/2$).
Therefore $t_2(n)\gs 3n-3$. On the other hand, by Theorem \ref{Th-bound}(iii) we have
$$t_2(n)\ls n+s(\Z_{n/2}^2)=n+4\times\f n2-3=3n-3.$$
Therefore $t_2(n)=3n-3$. 

In view of the above, we have completed our proof of Theorem \ref{Th-value}. \qed

\section{Proofs of Theorems \ref{Th1.2}-\ref{Th1.3}}
\setcounter{lemma}{0}
\setcounter{theorem}{0}
\setcounter{corollary}{0}
\setcounter{remark}{0}
\setcounter{equation}{0}

The following well-known lemma can be found in Theorem 5.1.10 of A. Geroldinger and F. Halter-Koch \cite[p.\,309]{GH}.

\begin{lemma} \label{Lem2.2} Let $n>1$ be an integer and let $a_1,\ldots,a_{n-1}\in\Z$. Then
$\sum_{i\in I}a_i\not\eq0\pmod n$ for all $\emptyset\not= I\se\{1,\ldots,n-1\}$ if and only if
$a_1\eq\cdots\eq a_{n-1}\eq c\pmod n$ for some integer $c$ relatively prime to $n$.
\end{lemma}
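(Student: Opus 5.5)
For the ``if'' direction I would argue directly. If $a_1\eq\cdots\eq a_{n-1}\eq c\pmod n$ with $\gcd(c,n)=1$, then for any nonempty $I\se\{1,\ldots,n-1\}$ we have $\sum_{i\in I}a_i\eq |I|c\pmod n$. Since $1\ls |I|\ls n-1$ and $c$ is invertible modulo $n$, this sum is not $\eq0\pmod n$.

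For the ``only if'' direction I would use the partial-sum (pigeonhole) argument already recalled in the introduction for $D(G)\ls|G|$. Assume no nonempty subsum of $a_1,\ldots,a_{n-1}$ is divisible by $n$. Consider the $n$ partial sums
$$0,\ a_1,\ a_1+a_2,\ \ldots,\ a_1+\cdots+a_{n-1}.$$
They are pairwise incongruent modulo $n$: if two coincided, their difference would be a nonempty block $\sum_{i=s+1}^t a_i\eq0\pmod n$. Since there are exactly $n$ of them, they form a complete residue system modulo $n$.

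The same hypothesis holds for every reordering of the $a_i$, so the same conclusion applies to any permutation. The key step is to compare the orderings $a_1,a_2,a_3,\ldots,a_{n-1}$ and $a_2,a_1,a_3,\ldots,a_{n-1}$ (this needs $n\gs3$; for $n=2$ the uniformity of residues is vacuous).
\begin{itemize}
\item Their partial sums coincide except in the second position, which is $a_1$ in the first list and $a_2$ in the second.
\item Both lists of $n$ residues exhaust $\Z_n$.
\item The two lists share $n-1$ elements, so the remaining element must be the same, giving $a_1\eq a_2\pmod n$.
\end{itemize}
Applying this to an arbitrary permutation that places $a_i,a_j$ in the first two positions gives $a_i\eq a_j\pmod n$ for all $i,j$. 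So all $a_i$ are congruent to a common $c$ modulo $n$.

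Finally I would show that $\gcd(c,n)=1$. Clearly $c\not\eq0\pmod n$, since $a_1$ alone is not a zero sum. If $d=\gcd(c,n)>1$, take any $n/d\ls n/2\ls n-1$ of the terms. Their sum is $(n/d)c\eq0\pmod n$, a contradiction. Hence $\gcd(c,n)=1$.

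I do not expect a real obstacle here. The only point requiring care is the swap comparison: both lists of partial sums are full residue systems, and that is exactly what forces the single differing entries $a_1$ and $a_2$ to agree modulo $n$.
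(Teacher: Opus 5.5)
Your proof is correct in both directions. The paper gives no argument of its own for this lemma: it quotes it from Theorem 5.1.10 of Geroldinger and Halter-Koch, where it sits inside the general theory of zero-sum free sequences over cyclic groups.

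What you supply instead is a short self-contained proof. The pigeonhole argument on the $n$ partial sums (the same device the introduction uses for $D(G)\ls |G|$) shows that every ordering of $a_1,\ldots,a_{n-1}$ gives a complete residue system modulo $n$. Transposing the first two terms changes exactly one partial sum, which forces $a_1\eq a_2\pmod n$. Your remark that this step needs $n\gs3$ is sound, and so is your treatment of $\gcd(c,n)$ via $n/d\ls n/2\ls n-1$ terms.

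The citation places the lemma inside a broader structural theory. Your route makes the paper independent of that reference for the one case it actually needs: the ``only if'' direction, used in Case 1 of the proof of Theorem \ref{Th1.2}.

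As a minor economy, once all $a_i\eq c\pmod n$ you can read off $\gcd(c,n)=1$ directly. In that case $0,c,2c,\ldots,(n-1)c$ are the partial sums, so they are pairwise incongruent modulo $n$.
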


 \medskip
 \noindent{\it Proof of Theorem \ref{Th1.2}}.  By the EGZ theorem, the set
 $${\mathcal I}=\l\{I\se\{1,\ldots,2n+\da_n\}:\ |I|=n\ \t{and}\ \sum_{i\in I}a_i\eq0\pmod n\r\}$$
 is nonempty. Suppose that the desired result fails. Then, for any $I\in{\mathcal I}$ we have
 $\sum_{i\in I}a_i\eq0\pmod{n^2}$.

 Let $I\se\{1\ls i\ls 2n+\da_n:\ a_i\eq r\pmod n\}$ with $|I|=n$.
 As the desired result fails, by Lemma \ref{Lem2.1}
 we have $a_j\not\eq r\pmod n$ for some $j\in\{1,\ldots,m+n\}\sm I$.

 {\it Case} 1.  $a_i\not\eq a_j\pmod n$ for some $i,j\in\{1,\ldots,2n+\da_n\}\sm I$.

In this case, we may choose $J\se\{1,\ldots,2n+\da_n\}\sm I$
with $J=\{j_1,\ldots,j_{n-1}\}$, $j_1<\ldots<j_{n-1}$ and $|\{a_j+n\Z:\ j\in J\}|>1$. As $|\{a_j-r+n\Z:\ j\in J\}|>1$,
by Lemma \ref{Lem2.2} there is a nonempty subset $K$ of $J$ such that
 $\sum_{k\in K}(a_k-r)\eq0\pmod n$ and hence
$$\sum_{k\in K}a_k\eq r|K|\pmod n.$$

Let $s$ and $t$ be any two distinct elements of $I$. Choose $I_0\se I\sm\{s,t\}$ with $|I_0|=n-1-|K|$,
and set $I_s=I_0\cup \{s\}$ and $I_t=I_0\cup\{t\}$. Then
$$\sum_{i\in I_s\cup K}a_i\eq |I_s|r+\sum_{i\in K}a_i\eq(n-|K|)r+r|K|\eq0\pmod n$$
and hence $I_s\cup K\in\mathcal I$. Similarly, $I_t\cup K\in\mathcal I$. Thus
$$\sum_{i\in I_0\cup K\cup\{s\}}a_i\eq0\eq\sum_{i\in I_0\cup K\cup\{t\}}a_i\pmod{p^2}$$
and hence $a_s\eq a_t\pmod {p^2}$.

By the above, for some integer $r'\eq r\pmod n$, we have $a_i\eq r'\pmod{n^2}$ for all $i\in I$.
Thus
$$\sum_{i\in I}a_i\eq\sum_{i\in I}r'\eq nr'\not\eq0\pmod{n^2},$$
which leads to a contradiction since $r\not\eq0\pmod n$.

{\it Case} 2. For some $c\in\{1,\ldots,n-1\}\sm\{r\}$, we have $a_j\eq c\pmod n$ for all $j\in\{1,\ldots,2n+\da_n\}\sm I$.

In this case, we distinguish two subcases.

{\it Subcase} 2.1. $\da_n=1$.

In this subcase,  $J=\{1,\ldots,2n+\da_n\}\sm I$
has cardinality $n+1$. As $a_j\eq c\pmod n$ for all $j\in J$,
by Lemma \ref{Lem2.1} we get a contradiction.

{\it Subcase} 2.2. $\da_n=-1$.

In this subcase, $n$ is even, and all the numbers $a_1,\ldots,a_{2n-1}$ have the same parity.
As $r\eq c\pmod 2$ and $r\not=c$, $d=\gcd(r-c,n)$ is a proper divisor of $n$ with $2\mid d$.
For $k\in\{1,\ldots,n\}$, we clearly have
$$kr+(n-k)c\eq0\pmod n\iff n'\mid k\iff k\in\{\lambda n':\ \lambda=1,\ldots,d\},$$
where $n'=n/d$.

Let $s,t\in I$ with $s\not=t$. Then we can choose $I_1\se I\sm\{t\}$ with $|I_1|=n'$ and $s\in I_1$,
and $J_1\se J$
with $|J_1|=n-n'$. Note that
$$\sum_{i\in I_1\cup J_1}a_i\eq n'r+(n-n')c\eq n\f{r-c}d\eq0\pmod n$$
and also
$$\sum_{i\in I_1'\cup J_1}a_i\eq n'r+(n-n')c\eq0\pmod n,$$
where $I_1'=(I_1\sm\{s\})\cup\{t\}$.
Therefore, we must have
\begin{equation}\label{I1J}\sum_{i\in I_1\cup J_1}a_i\eq0\eq\sum_{i\in I_1'\cup J_1}a_i\pmod{n^2}
\end{equation}
and hence $a_s\eq a_t\pmod{n^2}$.

By the last paragraph, there is an integer $r'\eq r\pmod n$ such that $a_i\eq r'\pmod{n^2}$
for all $i\in I$. Similarly, as $1\ls n-n'<n-1$, there is an integer $c'\eq c\pmod n$ such that $a_j\eq c'\pmod{n^2}$
for all $j\in J$. In view of \eqref{I1J}, we have
\begin{equation}
\label{nr} n'r'+(n-n')c'\eq0\pmod{n^2}.
\end{equation}

Take $I_2\se\ I$ with $|I_2|=2n'\ls n$ and $J_2\se J$ with $|J_2|=n-2n'\ls n-1$. Then
$|I_2\cup J_2|=|I_2|+|J_2|=n$ and
$$\sum_{i\in I_2\cup J_2}a_i\eq 2n'r+(n-2n')c\eq 2n\f{r-c}d\eq0\pmod n.$$
Thus, we must have
$$\sum_{i\in I_2\cup J_2}a_i\eq0\pmod{n^2}$$
and hence
\begin{equation}\label{2nr}2n'r'+(n-2n')c'\eq0\pmod{n^2}.
\end{equation}
Combining \eqref{nr} and \eqref{2nr}, we get $nc'\eq0\pmod{n^2}$,
which is impossible since $c'\eq c\not\eq0\pmod n$.

In view of the above, we have completed the proof of Theorem \ref{Th1.2}. \qed

The following lemma in the case $n\ls p$ is just \cite[Lemma 3.4]{G26}.

\begin{lemma} \label{Lem-pn} Let $p$ be an odd prime, and let $a_1,\ldots,a_{p+n-1}\in\Z$, where $n$ is a positive integer. Suppose that
$$\max_{0\ls r\ls p-1}|\{1\ls i\ls p+n-1:\ a_i\eq r\pmod p\}|\ls n.$$ Then, for any $b\in\Z$, we have
$\sum_{i\in I}a_i\eq b\pmod p$ for some $I\se\{1,\ldots,p+n-1\}$
with $|I|=n$.
\end{lemma}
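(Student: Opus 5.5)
We argue by induction on $n$, with $p$ fixed, and we may assume $n\gs2$. For $n=1$ we have $p$ integers with each residue class modulo $p$ containing at most one of them. Hence they form a complete residue system modulo $p$, and any $b$ is hit by a singleton $I$.

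For the inductive step, the natural tool is the Cauchy--Davenport theorem in $\Z_p$ (cf. \cite[p.\,44]{N96}), used in the standard way one proves $n$-fold restricted sums. First we partition the multiset $\{a_i+p\Z\}$ into $n$ "rows" $A_1,\ldots,A_n$, each a \emph{set} of residues (distinct elements). This is possible since every residue class has multiplicity at most $n$: list the elements sorted by residue and place the $j$-th element into row $j \bmod n$; equal residues then land in distinct rows because a class has at most $n$ members. The total size is $\sum_{j=1}^n|A_j|=p+n-1$. An element $x_1+\cdots+x_n$ with $x_j\in A_j$ corresponds to a choice of $n$ distinct indices, i.e. a set $I$ with $|I|=n$. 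By Cauchy--Davenport,
$$|A_1+\cdots+A_n|\gs\min\Big\{p,\ \sum_{j=1}^n|A_j|-(n-1)\Big\}=\min\{p,\,p\}=p,$$
so $A_1+\cdots+A_n=\Z_p$, and every $b$ is represented. Rows are nonempty as long as $p+n-1\gs n$, which always holds.

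So the proof is short, and induction is not even needed. The main point to check is the row decomposition. We arrange the $p+n-1$ indices in blocks by residue class and assign the $k$-th listed index (in this ordering) to row $k \bmod n$. Since each block has length at most $n$, consecutive positions within a block fall in distinct rows modulo $n$. This gives each $A_j$ distinct residues, and the distinct index sets are automatic because rows are disjoint sets of indices. Every row is nonempty because $p+n-1\gs n$. The only remaining subtlety is the case $n>p$ (the lemma in \cite{G26} covered $n\ls p$). The Cauchy--Davenport bound handles it without change, since the minimum with $p$ is already attained.
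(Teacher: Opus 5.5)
Your proof is correct and follows essentially the same route as the paper. You split the indices into $n$ nonempty rows whose residues are pairwise distinct, then apply the $n$-fold Cauchy--Davenport bound $|A_1+\cdots+A_n|\gs\min\{p,\sum_j|A_j|-n+1\}=p$. Your round-robin construction of the rows justifies the partition that the paper only calls ``clear''; the induction on $n$ you announce at the start is never used and should be dropped.
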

\Proof. Clearly, there is a partition $\{I_s\}_{s=1}^n$ of $\{1,\ldots,p+n-1\}$
such that $|A_s|=|I_s|$ for all $s=1,\ldots,n$, where $A_s=\{a_i+p\Z:\ i\in I_s\}$.
 By the Cauchy-Davenport
theorem, for the sumset $A_1+\cdots+A_n$, we have
$$ |A_1+\cdots+A_n|\gs\min\{p,|A_1|+\cdots+|A_n|-n+1\}=\min\{p,p\}=p$$
and hence $A_1+\cdots+A_n=\Z/p\Z$. So, for each $b\in\Z$ there are $i_s\in I_s$ $(1\ls \ls n)$
such that $\sum_{s=1}^n a_{i_s}\eq b\pmod n$. This concludes the proof. \qed

\medskip
\noindent {\it Proof of Theorem \ref{Th1.3}}.
Assume that the desired result fails. Then, for any $I\se\{1,\ldots,2p+1\}$
with $|I|=p$ and $p\mid\sum_{i\in I}a_i$, we must have $p^2\mid \sum_{i\in I}a_i$.

For each $i=1,\ldots,2p+1$, let us write  $a_i=pq_i+r_i$ with $q_i\in\Z$ and $r_i\in\{1,\ldots,p-1\}$. By the condition, $r_s+r_t\eq r_u+r_v\pmod p$.
We want to show that $a_s+a_t\eq a_u+a_v\pmod {p^2}$ which leads to a contradiction.

{\it Case} 1. Among those $r_i\ (i\in\{1,\ldots,2p+1\}\sm\{s,t,u,v\})$, each can be repeated at most $p-2$ times.

In this case, by Lemma \ref{Lem-pn}, for some $J\se\{1,\ldots,2p+1\}\sm\{s,t,u,v\}$ with $|J|=p-2$,
we have
$$\sum_{j\in J} r_j\eq -(r_s+r_t)\eq-(r_u+r_v)\pmod p.$$
So
$$\sum_{j\in J\cup\{s,t\}}a_j\eq\sum_{j\in J\cup\{s,t\}}r_j\eq0\eq\sum_{j\in J\cup\{u,v\}}r_j
\eq \sum_{j\in J\cup\{u,t\}}a_j\pmod p.$$
Therefore
$$\sum_{j\in J\cup\{s,t\}}a_j\eq0\eq \sum_{j\in J\cup\{u,v\}}a_j\pmod{p^2}$$
and hence $a_s+a_t\eq a_u+a_v\pmod{p^2}$.

{\it Case} 2. $|\{i\in\{1,\ldots,2p+1\}\sm\{s,t,u,v\}:\ r_i=c\}|=p-1$ for some $1\ls c\ls p-1$
and $|\{r_i:\ i\in\{1,\ldots,2p+1\}\sm\{s,t,u,v\}\}|=2$.

Take $I\se\{1,\ldots,2p+1\}\sm\{s,t,u,v\}$ with $|I|=p-1$ such that $r_i=c$ for all $i\in I$.
Suppose that
$r_j=d$ for all $j\in\{1,\ldots,2p+1\}\sm(I\cup\{s,t,u,v\})$.
Note that $d\not=c$.

 Choose the unique integer $k\in\{0,\ldots,p-1\}$ such that $k(c-d)\eq 2d-(r_s+r_t)\pmod p$.
If $r_s+r_t\not\eq c+d\pmod p$, then $k\not=p-1$, and for any $I_1\se I$ and $I_2\se\{1,\ldots,2p+1\}\sm(I\cup\{s,t,u,v\})$ with $|I_1|=k$ and $|I_2|=p-2-k$, we have
$$\sum_{i\in I_1\cup I_2}a_i\eq \sum_{i\in I_1\cup I_2}r_i\eq kc+(p-2-k)d\eq-(r_s+r_t)\eq-(r_u+r_v)\pmod p$$
and hence
$$\sum_{i\in I_1\cup I_2\cup\{s,t\}}a_i\eq0\eq \sum_{i\in I_1\cup I_2\cup\{u,v\}}a_i\pmod{p^2}$$
which implies that $a_s+a_t\eq a_u+a_v\pmod{p^2}$.

Now we consider the remaining case $r_s+r_t\eq c+d\pmod p$.
Choose $I_1\se I$ with $|I_1|=p-2$, and also take $I_2\se\{1,\ldots,2p+1\}\sm (I\cup\{s,t,u,v\})$
with $|I_2|=p-2$. Then $J=I_1\cup I_2\cup\{u,v\}$ has cardinality $2p-2$. Note that $r_u,r_v\not=c$
since $|\{1\ls i\ls 2p+1:\ r_i=c\}|<p$. As $r_u+r_v\eq c+d\pmod p$, we also have $r_u,r_v\not=d$. Thus $\{r_u,r_v\}\cap\{c,d\}=\emptyset$. Choose the unique $k\in\{0,\ldots,p-1\}$
 with
 \begin{equation}\label{d-c}
 (k+1)(d-c)\eq c-r_v\pmod p.
 \end{equation}
  As $r_v\not=c,d$, we have $0\ls k\ls p-3$.
 Choose $i_1\in I_1$, $i_2\in I_2$, $J_1\se I_1\sm\{i_1\}$ and $J_2\se I_2\sm{i_2}$ with $|J_1|=p-3-k$ and $|J_2|=k$. Then $J=J_1\cup J_2\cup\{v\}$
 has cardinality $p-2$, and
 $$\sum_{j\in J}r_j=r_v+(p-3-k)c+kd\eq-(c+d)\eq-(r_s+r_t)\pmod p$$
 in view of \eqref{d-c}. Thus
 $$\sum_{j\in J\cup\{s,t\}}r_j\eq0\eq\sum_{j\in J\cup\{i_1,i_2\}}r_j\pmod p$$
 and hence
 $$\sum_{j\in J\cup\{s,t\}}a_j\eq0\eq\sum_{j\in J\cup\{i_1,i_2\}}a_j\pmod {p^2},$$
 which yields that $a_s+a_t\eq a_{i_1}+a_{i_2}\pmod{p^2}$.
 Similarly, we can prove that $a_u+a_v\eq a_{i_1}+a_{i_2}\pmod{p^2}$.
 Thus $a_s+a_t\eq a_u+a_v\pmod{p^2}$ as desired.

In view of the above, we have completed the proof of Theorem \ref{Th1.3}.
\qed

\Ack. The author would like to thank Prof. Qinghai Zhong for providing some references.

\end{document}